\newtheorem*{rep@theorem}{\rep@title}
\newcommand{\newreptheorem}[2]{\newtheorem*{rep@#1}{\rep@title}
\newenvironment{rep#1}[1]{\def\rep@title{#2 \ref*{##1}}
\begin{rep@#1}}
{\end{rep@#1}}} 
\theoremstyle{plain}
  \newtheorem{thm}{Theorem}[section]
  \newtheorem*{thm*}{Theorem}
  \newtheorem{prop}[thm]{Proposition}
  \newtheorem{lem}[thm]{Lemma}
  \newtheorem{cor}[thm]{Corollary}
\theoremstyle{definition}
  \newtheorem{definition}[thm]{Definition}
\theoremstyle{remark}
  \newtheorem{remark}[thm]{Remark}
\numberwithin{equation}{section}
\begin{document}

\title[Parabolic subgroups of two-dimensional Artin groups]{Parabolic subgroups of two-dimensional Artin groups and systolic-by-function complexes}

\author{Mart\'in Axel Blufstein}

\address{Departamento  de Matem\'atica - IMAS\\
 FCEyN, Universidad de Buenos Aires. Buenos Aires, Argentina.}
\email{mblufstein@dm.uba.ar}

\thanks{Researcher of CONICET}

\subjclass[2020]{20F36, 20F65, 20F06.}

\keywords{Artin groups, parabolic subgroups, systolic complexes.}

\begin{abstract}
We extend previous results by Cumplido, Martin and Vaskou on parabolic subgroups of large-type Artin groups to a broader family of two-dimensional Artin groups. In particular, we prove that an arbitrary intersection of parabolic subgroups of a $(2,2)$-free two-dimensional Artin group is itself a parabolic subgroup. An Artin group is $(2,2)$-free if its defining graph does not have two consecutive edges labeled by $2$. As a consequence of this result, we solve the conjugacy stability problem for this family by applying an algorithm introduced by Cumplido. All of this is accomplished by considering systolic-by-function complexes, which generalize systolic complexes. Systolic-by-function complexes have a more flexible structure than systolic complexes since we allow the edges to have different lengths. At the same time, their geometry is rigid enough to satisfy an analogue of the Cartan-Hadamard theorem and other geometric properties similar to those of systolic complexes.
\end{abstract}

\maketitle


\section{Introduction}

Let $\Gamma_S$ be a finite simple graph with vertex set $S$ and edges labeled by integers $m\geq 2$. The label of the edge between $s$ and $t$ is denoted by $m_{st}$, and we put $m_{st}=\infty$ if there is no edge between them.  The \textit{Artin group} defined by $\Gamma_S$ is the group $A_S$ given by the following presentation:
$$\langle S \mid \underbrace{sts\cdots}_{m_{st} \text{ letters}}=\underbrace{tst\cdots}_{m_{st}\text{ letters}} \forall s,t \text{ such that } m_{st}\neq\infty\rangle.$$
 The graph $\Gamma_S$ is implicit in the notation $A_S$.

In this article we study the structure of the parabolic subgroups of two-dimensional Artin groups. By results of Charney and Davis \cite{CD1, CD2}, it is well-known that an Artin group $A_S$ is $2$-dimensional (i.e. it has geometric dimension $2$) if and only if for every triangle in the graph $\Gamma_S$ with edges labeled by $p$, $q$ and $r$ we have $\frac{1}{p} + \frac{1}{q} + \frac{1}{r} \leq 1$.

Given an Artin group $A_S$, by a result of Van der Lek \cite{V}, the subgroup generated by a subset $S'\subseteq S$ is isomorphic to $A_{S'}$, where $A_{S'}$ is the Artin group corresponding to $\Gamma_{S'}$, the full subgraph of $\Gamma_S$ spanned by $S'$. These subgroups are called the \textit{standard parabolic subgroups} of $A_S$, and their conjugates are the \textit{parabolic subgroups} of $A_S$. Thanks to the result of Van der Lek, parabolic subgroups have become a fundamental tool in the study of Artin groups. For example, the Deligne complex \cite{CD2} and an analogue to the curve complex of braid groups \cite{CGGW,MW} are simplicial complexes for Artin groups that are defined using parabolic subgroups.

One of the main questions regarding parabolic subgroups is whether they are stable under intersection. That is, is the intersection of an arbitrary family of parabolic subgroups a parabolic subgroup? The answer was already known to be affirmative for the intersection of standard parabolic subgroups \cite{V} and in the case of braid groups. A braid group on $n$ strands can be thought of as the mapping class group of a punctured disk $D_n$ with $n$ punctures. Its parabolic subgroups are in bijection with isotopy classes of nondegenerate, simple closed multicurves in $D_n$. The complement of each of these multicurves is a disjoint union of punctured disks in $D_n$. In \cite{FM} an intersection between these families of punctured disks is defined. This intersection corresponds, via the bijection, to the intersection between parabolic subgroups, and can be used to give an affirmative answer to the question. It was also known for graph products of groups, and in particular for right-angled Artin groups \cite{AM}. More recently, the answer was generalized to Artin groups of spherical type using Garside theory \cite{CGGW}. Combining this previous result with the structure of the Deligne complex, in \cite{MW} it was shown that the intersection of two parabolic subgroups of spherical type inside an FC-type Artin group is a parabolic subgroup of spherical type.

In \cite{CMV}  Cumplido, Martin and Vaskou used a geometric approach to solve this problem for Artin groups of large-type (i.e. those with $m_{st}\geq 3$ for all $s,t\in S$). They introduced a simplicial complex associated to an Artin group, called the Artin complex, on which the Artin group acts cocompactly and without inversions (see Section \ref{theartincomplex} for definitions) and proved the following:

\begin{thm}[\cite{CMV}, Theorem 11, Remark 15, Corollary 16]\label{parabolic}
Let $A_S$ be an Artin group and $X_S$ its Artin complex. If any time an element of $A_S$ fixes two vertices of $X_S$ it fixes pointwise a combinatorial path joining them, then
\begin{enumerate}
\item An arbitrary intersection of parabolic subgroups of $A_S$ is a parabolic subgroup of $A_S$.
\item If $P_1$ and $P_2$ are two parabolic subgroups of $A_S$ such that $P_1\subseteq P_2$, then $P_1$ is a parabolic subgroup of $P_2$.
\item For a subset $B\subseteq A_S$, there is a unique minimal parabolic subgroup of $A_S$ (with respect to the inclusion) containing $B$, called the parabolic closure of $B$.
\item The set of parabolic subgroups of $A_S$ is a lattice with respect to the inclusion.
\end{enumerate}
\end{thm}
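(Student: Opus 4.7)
The plan is to use the correspondence between parabolic subgroups and stabilizers of simplices in the Artin complex $X_S$: every parabolic arises as the stabilizer of some simplex $\sigma \subseteq X_S$, so Part~(1) amounts to showing that the intersection of a family of such stabilizers is itself the stabilizer of a simplex.

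For Part~(1), let $(P_i)_{i \in I}$ be a family of parabolic subgroups with $P_i = \operatorname{Stab}(\sigma_i)$, and set $K = \bigcap_i P_i$. Every element $g \in K$ fixes every vertex of every $\sigma_i$. Fix two such vertices $v,w$; by hypothesis each individual $g \in K$ fixes pointwise some combinatorial path from $v$ to $w$. The heart of the argument is to upgrade this element-wise statement to a subgroup-wide one, producing a single combinatorial path $\gamma_{v,w}$ from $v$ to $w$ that is fixed pointwise by \emph{all} of $K$. I would do this by selecting a canonical path --- for instance, a minimal-length or lexicographically first geodesic --- and showing that any $g \in K$ must fix $\gamma_{v,w}$. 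This is where the systolic-by-function geometry of $X_S$ should enter, providing a uniqueness or convexity property of geodesics strong enough to force invariance. Ranging over all pairs of vertices of the $\sigma_i$, $K$ fixes pointwise a connected subcomplex $Y$ containing every $\sigma_i$. Since vertex stabilizers in $X_S$ are parabolic, one then argues that $K$ coincides with the stabilizer of a single simplex inside $Y$, and hence is itself parabolic.

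Parts~(2)--(4) follow essentially formally. For~(2), after conjugating so that $P_2 = A_T$ is standard, the inclusion $P_1 \subseteq A_T$ can be reinterpreted inside the Artin complex of $A_T$ and Part~(1) applied in that smaller context. For~(3), the parabolic closure of $B \subseteq A_S$ is $\bigcap\{P : P \supseteq B, \ P \text{ parabolic}\}$, which is parabolic by~(1) and minimal by construction. Part~(4) is then immediate: the meet $P_1 \wedge P_2 := P_1 \cap P_2$ is parabolic by~(1), and the join $P_1 \vee P_2$ is the parabolic closure of $P_1 \cup P_2$, which exists by~(3).

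The main obstacle is precisely the canonicalization step described above. Since the hypothesis is stated one element at a time, a priori the paths fixed by different elements of $K$ could be quite different from one another, and their union need not be $K$-invariant. Producing a canonical $K$-invariant path --- or equivalently, establishing that common fixed sets of subgroups are combinatorially convex --- is exactly what the systolic-by-function structure developed elsewhere in the paper is designed to provide, and I expect this geometric input to account for the bulk of the technical work.
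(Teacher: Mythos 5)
First, a point of orientation: the paper does not prove this theorem at all. It is imported verbatim from \cite{CMV} (Theorem 11, Remark 15, Corollary 16), and the paper's contribution is only to verify its hypothesis for $(2,2)$-free two-dimensional Artin groups (Theorem \ref{theorem}) via the systolic-by-function machinery. So your proposal must stand on its own as a reconstruction of the argument of \cite{CMV}, and as such it has genuine gaps rather than being a correct alternative route. The central gap is the one you flag yourself, and the fix you suggest does not work: combinatorial geodesics in these complexes are far from unique, the hypothesis only provides, for each individual element, \emph{some} fixed combinatorial path (not a geodesic, and no convexity of fixed-point sets), and a ``minimal-length or lexicographically first'' path is not preserved by $K$ unless the selection is itself $K$-equivariant --- which is precisely what has to be produced. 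Moreover, even granting a $K$-fixed connected subcomplex $Y$ containing all the $\sigma_i$, your final step is circular: pointwise fixing $Y$ only gives $K\subseteq \mathrm{Stab}(\tau)$ for simplices $\tau\subseteq Y$, whereas the theorem requires the reverse inclusion, i.e.\ that $K$ is the \emph{full} stabilizer of some simplex. In \cite{CMV} this is not handled by canonicalizing paths; the argument is an induction on the number of generators and on distances in $X_S$, exploiting that the action is without inversions (stabilizing a simplex forces fixing it pointwise, so simplex stabilizers are exactly finite intersections of vertex stabilizers) and that links of vertices are themselves Artin complexes of the corresponding standard parabolic subgroups (Lemma \ref{links}), so that once the intersection is trapped in a vertex stabilizer one can pass to the link and induct on rank.

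Two further points. For item (2) you cannot ``apply Part (1) inside $A_T$'': the hypothesis of the theorem is a property of the single complex $X_S$, and nothing in the statement grants the fixed-path property for the Artin complex of $A_T$ (in the paper's application this happens to hold because standard parabolics of $(2,2)$-free two-dimensional Artin groups are again of that type, but a proof of the theorem as stated cannot use that); in \cite{CMV} item (2) comes out of the same inductive argument (and was proved independently by Godelle), not as a formal corollary of (1). Finally, the reduction of an \emph{arbitrary} (possibly infinite) family of parabolic subgroups to a tractable (finite, or stabilizing) intersection requires an argument that your sketch omits entirely. Items (3) and (4), as you say, do follow formally from (1) and (2), and that part of your proposal matches the paper's remark.
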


Note that items (3) and (4) are direct consequences of item (2). These last facts were also proven in \cite{CGGW} in the context of spherical Artin groups. The proofs presented in \cite{CMV} are a geometrical reinterpretation of the latter. The second item had also been proven previously by Godelle in \cite{G}.

In order to show that large-type Artin groups satisfy the conditions of the theorem, Cumplido, Martin and Vaskou proved that Artin complexes are systolic in the sense of \cite{JS}, and used the fact that if a group $G$ acts without inversions on a systolic complex and fixes two vertices, then it fixes pointwise every combinatorial geodesic between them (\cite[Lemma 14]{CMV}). Systolic complexes were first introduced by Chepoi under the name of \textit{bridged complexes} in \cite{CH}. They were later discovered and investigated independently by Januszkiewicz and \'Swi\c{a}tkowski in \cite{JS}, and by Haglund in \cite{H}. Systolicity gives certain rigidity to the complex, which allows them to control its combinatorial geodesics.

In the same spirit as \cite{HO}, in this article we generalize  Cumplido, Martin and Vaskou's result to a broader class of two-dimensional Artin groups. This is accomplished by considering a geometric structure more flexible than systolicity. In Section \ref{lengthsystolic} we introduce systolic-by-function complexes, which generalize systolic complexes, and prove an analogue of the Cartan-Hadamard theorem. In systolic-by-function complexes we permit the edges to have different lengths. This flexibility allows us to consider a broader family of examples, while maintaining a rigid enough geometry. In Section~\ref{theartincomplex} we recall the construction of the Artin complex given in \cite{CMV}. Then, in Section \ref{parabolicsubgroups} we show that the Artin complexes of \textit{$(2,2)$-free} two-dimensional Artin groups, which are those whose defining graphs do not have two consecutive edges labeled by $2$, are systolic-by-function and prove the following result.

\begin{thm}\label{theorem}
Let $A_S$ be a $(2,2)$-free two-dimensional Artin group with $|S|\geq 3$ and $X_S$ its Artin complex. Then, if an element of $A_S$ fixes two vertices in $X_S$, it fixes pointwise a combinatorial path joining them. 
\end{thm}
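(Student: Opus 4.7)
The plan is to follow the overall strategy of Cumplido, Martin, and Vaskou from \cite{CMV}, but replacing the systolic condition on the Artin complex $X_S$ by the more flexible systolic-by-function condition introduced in Section \ref{lengthsystolic}. Concretely, I would proceed in three steps: (a) equip $X_S$ with an assignment of positive edge lengths, depending on the labels $m_{st}$ of $\Gamma_S$, that turns $X_S$ into a systolic-by-function complex; (b) establish in the abstract setting of Section \ref{lengthsystolic} the analogue of \cite[Lemma 14]{CMV}, namely that if a group acts without inversions on a systolic-by-function complex and fixes two vertices, it fixes pointwise a combinatorial geodesic between them; (c) combine these two ingredients with the fact, recalled in Section \ref{theartincomplex}, that $A_S$ acts on $X_S$ without inversions.

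For step (a), each edge of $X_S$ lies in the orbit of an edge whose stabilizer is (up to conjugation) a dihedral Artin subgroup $A_{\{s,t\}}$, and I would assign it a length depending only on the label $m_{st}$. The verification that the resulting weighted complex is systolic-by-function then reduces to a cycle-length inequality in the links of vertices of $X_S$, which can be read off from the combinatorial description of $X_S$ recalled in Section \ref{theartincomplex} (and, in the large-type case, used in \cite{CMV}). The lengths must be chosen so that every short combinatorial cycle in a link either admits a diagonal or has metric length above whatever threshold the systolic-by-function condition prescribes.

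The $(2,2)$-free hypothesis enters at precisely this point. Edges with $m_{st}=2$ produce ``degenerate'' local configurations in the links, because the commutation relation $st=ts$ shortens certain cycles; these are exactly the obstructions to the Artin complex being systolic in the sense of \cite{JS}. Forbidding two $m=2$ edges from sharing a vertex of $\Gamma_S$ guarantees that in each such short cycle at most every other edge corresponds to an $m=2$ edge of the defining graph, so the intercalated edges (which have $m_{st}\geq 3$ and are further constrained by the two-dimensionality inequality $\frac{1}{p}+\frac{1}{q}+\frac{1}{r}\leq 1$ on triangles) can be assigned lengths large enough to push the total metric length of the cycle above the systolic-by-function threshold. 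The hypothesis $|S|\geq 3$ is used to avoid the dihedral case, where $X_S$ is too degenerate for the statement to carry content.

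I expect the main obstacle to be designing a single length function that is simultaneously large enough to cover every possible local configuration in the links of $X_S$, and still compatible with the Cartan--Hadamard-type theorem announced in Section \ref{lengthsystolic} (which presumably requires some structural control on how the lengths may vary). The combination of $(2,2)$-freeness and two-dimensionality should provide exactly enough slack to allow an explicit, uniform choice of lengths, but ruling out all edge cases of local subgraphs of $\Gamma_S$ incident to both $m=2$ and $m\geq 3$ edges will require a careful case analysis.
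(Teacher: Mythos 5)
Your step (a) is essentially the paper's route: one assigns to an edge of type $S\setminus\{s,t\}$ the length $1/m'_{st}$, where the modified labels $m'_{st}\in\{2,3,4,6\}$ exist precisely because of $(2,2)$-freeness and two-dimensionality (Proposition \ref{labels}), and one verifies largeness of links by induction using Lemmas \ref{simplyconnected}, \ref{links}, \ref{systole} and the local-to-global Theorem \ref{localtoglobal}. That part of your plan is sound.

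The genuine gap is your step (b). You assert, without any argument, an abstract analogue of \cite[Lemma 14]{CMV}: that a group acting without inversions on a systolic-by-function complex and fixing two vertices fixes a combinatorial geodesic pointwise. This is the actual content of Theorem \ref{theorem}, and no such abstract lemma is available (nor is it proved in the paper, nor is it clear that it is true). The systolic proof of \cite[Lemma 14]{CMV} exploits the rigid combinatorics of intervals and combinatorial geodesics in systolic complexes, and these do not transfer when edge lengths vary: combinatorial and metric geodesics no longer coincide, and the conclusion one can actually reach is weaker (some fixed combinatorial path, not every combinatorial geodesic). The paper instead proves the fixed-path statement directly: it takes a metric geodesic $\gamma$ and its image $\gamma'=g\gamma$, fills the resulting cycle by a minimal diagram, and uses the annulus inequality of Lemma \ref{annulus} together with Gauss--Bonnet to manufacture a shortcut contradicting geodesicity. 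Crucially, this argument is not purely abstract: it uses that the $A_S$-action preserves types, so no edge joins $v_i$ to $v'_i$; it uses the strict fundamental domain $K$ to project a diagonal edge $e$ and compare $l(e)$ with the distance between its endpoints along $\gamma$; and the delicate terminal case (both extremal vertices of degree $3$ with incident interior edges of length $\tfrac12$) is resolved using that no triangle of $X_S$ has two edges of length $\tfrac12$, a consequence of the specific length function. Without supplying either a proof of your abstract lemma or an argument of this Artin-complex-specific kind, your outline does not yield the theorem.
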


As an immediate consequence, the results of Theorem \ref{parabolic} hold for all $(2,2)$-free two-dimensional Artin groups (the cases with less than 3 generators were established in \cite{CGGW,CMV}). In particular, we derive the main result of this article.

\begin{thm}\label{main}
Let $A_S$ be a $(2,2)$-free two-dimensional Artin group. Then the intersection of an arbitrary family of parabolic subgroups is a parabolic subgroup.
\end{thm}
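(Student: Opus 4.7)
The plan is to deduce Theorem \ref{main} directly from Theorems \ref{parabolic} and \ref{theorem}: once we know that any element of $A_S$ fixing two vertices of the Artin complex $X_S$ also fixes a combinatorial path joining them, item (1) of Theorem \ref{parabolic} applies verbatim and gives precisely the stability of parabolic subgroups under arbitrary intersection. The cases $|S|\le 2$ are already covered in \cite{CGGW,CMV}, so all the work is concentrated in Theorem \ref{theorem}, which is what I would really have to prove.

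To establish Theorem \ref{theorem}, I would follow the overall strategy of \cite{CMV} but replace the systolicity of $X_S$ by the weaker systolic-by-function structure introduced in Section \ref{lengthsystolic}. Concretely, the first step is to equip $X_S$ with a well-chosen edge-length function $\ell$, depending on the labels $m_{st}$ of the defining graph, so that the resulting length-metric complex $(X_S,\ell)$ is systolic-by-function. The $(2,2)$-free hypothesis is what makes such a choice possible: it rules out the local configurations around pairs of edges labeled $2$ that would otherwise create short full cycles in links obstructing the local combinatorial curvature condition. Verifying the link condition will amount to describing, for each vertex of $X_S$, its link in terms of cosets of a rank-$2$ standard parabolic subgroup and checking that every ``short'' cycle has a chord of length bounded correctly by $\ell$.

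Once $(X_S,\ell)$ is systolic-by-function, I would invoke the analogue of the Cartan--Hadamard theorem proven in Section \ref{lengthsystolic}, together with an adaptation of \cite[Lemma 14]{CMV} to the systolic-by-function setting: a group acting without inversions on such a complex fixes pointwise every combinatorial $\ell$-geodesic between any two vertices it fixes. Since $A_S$ acts cocompactly and without inversions on $X_S$ (Section \ref{theartincomplex}), this yields Theorem \ref{theorem}, and combining it with Theorem \ref{parabolic}(1) gives Theorem \ref{main}.

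The main obstacle I expect is the verification of the link condition for $(X_S,\ell)$. Links of vertices in the Artin complex are built from cosets of standard parabolic subgroups of lower rank, and their combinatorial girth is sensitive to labels equal to $2$. The hard part is to design $\ell$ so that the length condition is satisfied at every vertex simultaneously, and to check that the only way the estimate could fail is through a pair of consecutive edges both labeled $2$, which is exactly what the $(2,2)$-free hypothesis forbids. This is where the hypothesis enters in an essential way, and the bulk of Section \ref{parabolicsubgroups} will be devoted to this case analysis.
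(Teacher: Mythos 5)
Your deduction of Theorem \ref{main} is exactly the paper's: the proof there consists of the single line that it follows immediately from Theorems \ref{parabolic} and \ref{theorem}, with the low-rank cases $|S|\le 2$ already covered in \cite{CGGW,CMV}, so on the statement at hand you and the paper coincide. Your sketch of how Theorem \ref{theorem} itself would be proved also matches the paper's strategy (length function on $X_S$ determined by the labels via Proposition \ref{labels}, the $(2,2)$-free hypothesis guaranteeing the triangle and perimeter conditions, Lemmas \ref{simplyconnected}--\ref{systole} for the link verification, and the local-to-global Theorem \ref{localtoglobal}); the one caveat is that the paper does \emph{not} prove the analogue of \cite[Lemma 14]{CMV} that you invoke, namely that every combinatorial $\ell$-geodesic between two fixed vertices is fixed pointwise. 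Theorem \ref{theorem} asserts only that \emph{some} path is fixed, and its proof is a minimal-counterexample argument with disk diagrams (the cycle $\gamma\cup g\gamma$, the rightmost connecting edge, Lemma \ref{annulus}, and a case analysis around edges of length $\tfrac12$) rather than a direct adaptation of the systolic interval argument; if you relied on the stronger ``every geodesic is fixed'' statement in the systolic-by-function setting, that step would need its own justification.
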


Finally, at the end of Section \ref{parabolicsubgroups}, we study the \textit{conjugacy stability problem} for two-dimensional Artin groups that are $(2,2)$-free, by applying a very recent result of Cumplido \cite{C}.  A subgroup~ $H$ of a group $G$ is \textit{conjugacy stable} if, for every pair $h,h'\in H$ such that there exists $g\in G$ with $g^{-1}hg=h'$, there is $\tilde{h}\in H$ such that $\tilde{h}^{-1}h\tilde{h}=h'$. The conjugacy stability problem consists in deciding which of the parabolic subgroups of an Artin group are conjugacy stable. By applying the algorithm described in \cite{C}, as a consequence of Theorem \ref{main} we obtain the following result.

\begin{thm}\label{conj}
Let $A_S$ be a $(2,2)$-free two-dimensional Artin group and $A_{S'}$ a standard parabolic subgroup. Then $A_{S'}$ is not conjugacy stable if and only if there exist vertices $s,t$ in $\Gamma_{S'}$ that are connected by an odd-labeled path in $\Gamma_S$, but are not connected by an odd-labeled path in $\Gamma_{S'}$.
\end{thm}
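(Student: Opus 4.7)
The plan is to apply the algorithm of Cumplido \cite{C}, which, for any Artin group in which arbitrary intersections of parabolic subgroups are parabolic, decides conjugacy stability of its standard parabolic subgroups. Theorem \ref{main} supplies exactly this hypothesis in our setting, so the algorithm is available. The first step is to recall Cumplido's criterion and, in this ambient regime, reduce conjugacy stability of $A_{S'}$ inside $A_S$ to the following test on standard generators: whenever two generators $s,t \in S'$ are conjugate in $A_S$, they must also be conjugate inside $A_{S'}$. Parabolic closures (which are well-defined by Theorem \ref{parabolic}(3) via Theorem \ref{theorem}) are what allow this reduction, since an obstruction must live inside some parabolic subgroup of $A_{S'}$ and, after minimizing, inside one generated by such a pair.

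Next, I would invoke the classical characterization of when standard generators are conjugate in an Artin group. If $s$ and $t$ are joined by an edge with odd label $m_{st}$, the braid relation $sts\cdots = tst\cdots$ (with $m_{st}$ letters) directly conjugates $s$ to $t$, and transitivity extends this to all odd-labeled paths. Conversely, the abelianization of $A_S$ is free abelian on the set of connected components of the \emph{odd subgraph} obtained from $\Gamma_S$ by deleting every even-labeled edge; hence generators lying in different components cannot be conjugate. Applied both to $A_S$ and to $A_{S'}$, this shows that two generators $s,t \in S'$ are conjugate in $A_S$ (respectively in $A_{S'}$) if and only if there is an odd-labeled path between them in $\Gamma_S$ (respectively $\Gamma_{S'}$).

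Combining these two ingredients yields the equivalence. If there exist $s,t \in S'$ joined by an odd-labeled path in $\Gamma_S$ but not in $\Gamma_{S'}$, then $s$ and $t$ are conjugate in $A_S$ but not in $A_{S'}$, so $A_{S'}$ fails to be conjugacy stable. Conversely, if no such pair exists, every pair of standard generators of $A_{S'}$ that becomes conjugate in $A_S$ was already conjugate inside $A_{S'}$, and Cumplido's algorithm terminates declaring $A_{S'}$ conjugacy stable.

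The main obstacle will be unpacking Cumplido's algorithm carefully enough to confirm that the obstruction to conjugacy stability is detected at the level of pairs of standard generators rather than at some deeper iteration: a priori the algorithm could produce nontrivial parabolic closures whose obstructions do not come from single pairs $\{s,t\}$. Checking that, in the $(2,2)$-free two-dimensional setting, Theorem \ref{main} together with the odd-graph description of generator conjugacy forces the algorithm to terminate in a single step is where the real work lies.
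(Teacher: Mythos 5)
Your overall strategy is the same as the paper's (apply Cumplido's algorithm, made available by the parabolic-closure results), but the proposal has a genuine gap exactly at its central step, and you acknowledge this yourself: you defer the verification that the algorithm's obstruction is detected at the level of pairs of standard generators, i.e.\ that it collapses to the odd-labeled-path criterion, calling it ``where the real work lies.'' That deferred step \emph{is} the proof. The paper closes it with one structural observation you never make: since $A_S$ is two-dimensional, the only spherical-type parabolic subgroups of rank at least $2$ are dihedral Artin groups of type $I_2(m)$. It is this fact that prevents Cumplido's Algorithm 4 from producing obstructions coming from larger spherical parabolic subgroups, and reduces the test precisely to whether some pair $s,t\in S'$ is joined by an odd-labeled path in $\Gamma_S$ but not in $\Gamma_{S'}$. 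Without that input, your claim that the algorithm ``terminates in a single step'' is unsubstantiated; the odd-subgraph/abelianization characterization of when two standard generators are conjugate, which you do supply, only handles the easy direction (an odd path in $\Gamma_S$ but not in $\Gamma_{S'}$ gives a failure of conjugacy stability) and does not by itself rule out deeper obstructions in the converse direction.

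A second, smaller inaccuracy: you state that Cumplido's result applies ``for any Artin group in which arbitrary intersections of parabolic subgroups are parabolic.'' That is not the hypothesis of her Theorem A, which requires the group to be standardisable, to satisfy the ribbon property, and to have parabolic closures for all elements. The closure condition does follow from Theorems \ref{theorem} and \ref{parabolic}, but standardisability and the ribbon property are separate inputs, supplied for two-dimensional Artin groups by Godelle's work, and your write-up never verifies them. So even the availability of the algorithm, as you set it up, is not fully justified.
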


Since conjugacy stability is preserved by conjugation, the previous theorem  solves the conjugacy stability problem for $(2,2)$-free two-dimensional Artin groups. Notice that Theorem \ref{conj} is a generalization of \cite[Theorem C]{CMV}.

\bigskip

\textbf{Acknowledgments:} I would like to thank Gabriel Minian for useful discussions and advice.


\section{Systolic-by-function complexes}\label{lengthsystolic}

In this section we define systolic-by-function complexes, which are a generalization of systolic complexes. We prove some basic properties and a local-to-global theorem analogous to the Cartan-Hadamard theorem. In Section \ref{parabolicsubgroups} we will make use of this geometric structure to prove Theorem~ \ref{theorem}.

\begin{definition}\label{length}
A \textit{length function} for a simplicial complex $X$ is a function $l:\text{edges}(X) \to [0,\frac{1}{2}]$ that assigns a real number between $0$ and $\frac{1}{2}$ to each edge of $X$, satisfying the two following conditions:
\begin{itemize}
\item the sum of the lengths of the three edges of any triangle is less than or equal to 1;
\item the triangle inequality holds. That is, given three edges $e_0,e_1,e_2$ that form a triangle, $l(e_i)\leq l(e_{i+1})+l(e_{i+2})$ (indices modulo 3).
\end{itemize}
A simplicial complex together with a length function is called a \textit{length complex}. 
\end{definition}

A \textit{cycle} in a (length) complex $X$ is a subcomplex $\sigma$ homeomorphic to $S^1$. We denote by $|\sigma|$ the number of edges in $\sigma$. The length of $\sigma$ is the sum of the lengths of its edges, and we denote it by~ $l(\sigma)$. A \textit{path} in $X$ is a subcomplex $\gamma$ homeomorphic to $[0,1]$. We define $|\gamma|$ and $l(\gamma)$ analogously.

A subcomplex $K$ of a simplicial complex $X$ is \textit{full} if any simplex of $X$ spanned by a set of vertices in $K$ is a simplex of $K$. A \textit{diagonal} in a cycle $\sigma$ in a simplicial complex $X$ is an edge of $X$ connecting two nonconsecutive vertices of $\sigma$. Thus, a cycle is full if and only if it has no diagonals and does not span a simplex. A simplicial complex $X$ is \textit{flag} if every set of vertices pairwise connected by edges spans a simplex of $X$.

We recall that given a simplex $s$ in a simplicial complex $X$, its \textit{link} $Lk_X(s)$ is the subcomplex of $X$ consisting of the simplices that are disjoint from $s$ and such that, together with $s$, span a simplex of $X$.

\begin{definition}\label{large}
A length complex $X$ is \textit{large} if it is flag and if every full cycle has length greater than or equal to 2. It is \textit{locally large} if the link of every vertex is large.
\end{definition}

It is clear from the definitions that a large length complex is locally large. This is because, since the complex is flag, the links of its vertices are flag and full cycles in the links are full cycles in the complex. The rest of this section is devoted to showing that the converse holds when $X$ is simply connected. This is a local-to-global theorem analogous to the classical result for systolic complexes~ \cite{JS}.

\begin{definition}\label{lsystolic}
A length complex $X$ is \textit{systolic-by-function} if it is connected, simply connected and locally large.
\end{definition}

\begin{remark}
A simplicial complex is systolic if and only if it is systolic-by-function with constant length function $l\equiv \frac{1}{3}$. In general, a simplicial complex is $k$-systolic if and only if it is systolic-by-function with constant length function $l\equiv \frac{2}{k}$.
\end{remark}

\begin{thm}\label{localtoglobal}
Let $X$ be a systolic-by-function length complex. Then $X$ is large.
\end{thm}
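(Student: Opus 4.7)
The plan is to mirror the classical Cartan--Hadamard argument for systolic complexes \cite{JS}, with the length function $l$ now replacing the constant $\tfrac{1}{3}$ and playing the role of an ``angle''. The proof splits into establishing flagness of $X$ and then the length bound on full cycles, both via minimal simplicial disc fillings, which exist by simple connectedness.

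For flagness, each link of $X$ is large and hence flag, so flagness of $X$ at simplices of dimension $\geq 3$ follows from the usual link induction: a clique $\{v_0,\dots,v_n\}$ with $n\geq 2$ restricts to a clique $\{v_1,\dots,v_n\}$ in $Lk_X(v_0)$, and flagness of the link provides the required simplex. The base case of a $3$-cycle is more delicate: three pairwise adjacent vertices $v_0,v_1,v_2$ that did not span a $2$-simplex would form a full $3$-cycle, and a minimal simplicial disc filling would contain an interior vertex $v$ whose $Lk_D(v)$ maps to a full cycle in $Lk_X(v)$ of length at most three edges of length $\leq \tfrac{1}{2}$, i.e.\ less than $2$, contradicting local largeness.

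For the length bound, suppose toward contradiction that $\sigma$ is a full cycle with $l(\sigma)<2$, and pick a simplicial filling $f\colon D\to X$ of $\sigma$ minimal in the number of $2$-simplices (with a secondary tiebreaker as needed). Fullness of $\sigma$ rules out the base case $D$ = single triangle and prevents boundary diagonals, so $D$ must have at least one interior vertex. Standard reduction moves then show that at each interior vertex $v\in D$, the image of $Lk_D(v)$ in $Lk_X(v)$ is an embedded full cycle: a diagonal or self-intersection in $Lk_X(v)$ would allow either a bigon collapse or the insertion of a $2$-simplex along that diagonal, in either case strictly reducing $D$ and contradicting minimality. Local largeness then forces the length of this cycle, which equals the sum of the lengths of the edges of $D$ opposite $v$, to be at least $2$.

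The contradiction finally comes from a combinatorial Gauss--Bonnet count on $D$: assigning to each triangle the three ``angles'' $l(e)$ at the vertex opposite each edge $e$, the triangle hypothesis $\sum l(e_i)\leq 1$ normalizes a flat triangle's angle sum to $1$, while the triangle inequality keeps the angle system consistent across adjacent triangles. The bound angle-sum $\geq 2$ at interior vertices gives nonpositive curvature there, so with $\chi(D)=1$ all positive curvature must concentrate on $\partial D$, where it can be controlled by a suitable multiple of $l(\sigma)$; this forces $l(\sigma)\geq 2$, the desired contradiction. The main obstacle I anticipate is the precise calibration of this Gauss--Bonnet count so that the interior-link bound $\geq 2$ and the per-triangle bound $\leq 1$ balance exactly at the threshold $l(\sigma)=2$, together with ruling out degeneracies of the minimal disc (repeated boundary vertices, non-injective triangles) that could complicate the pullback of the length structure along $f$.
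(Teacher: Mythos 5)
Your overall strategy is the same as the paper's (minimal disc fillings, a link condition at interior vertices forced by minimality, then combinatorial Gauss--Bonnet), but two key steps do not work as you describe. First, your claim that minimality forces the image of $Lk_D(v)$ to be an \emph{embedded full} cycle in $Lk_X(\varphi(v))$ is both stronger than what is true and not obtainable by the moves you invoke: if the image cycle has a diagonal $d$, refilling one side of $d$ costs at least $k-1$ triangles for a $(k+1)$-gon, so together with the cone over the other side you get at least as many $2$-simplices as before --- there is no strict reduction, and ``insertion of a 2-simplex'' cannot decrease the count at all. So diagonals (and identifications) in the link image can persist in a minimal diagram, and you cannot apply largeness of the link to the image cycle directly. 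What minimality actually gives, and all that is needed, is the weaker statement that the \emph{pullback} length of $Lk_D(v)$ is at least $2$; the paper proves this (Lemma \ref{nicediagram}) by refilling the \emph{entire} link cycle with a disc having no interior vertices, which exists precisely when its length is less than $2$ by a separate inductive lemma about large complexes (Lemma \ref{nointerior}, using flagness of the link, the triangle inequality, and the diagonal produced by non-fullness). That lemma is the missing ingredient behind your ``standard reduction moves.''

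Second, the Gauss--Bonnet endgame is exactly the part you leave as ``the main obstacle,'' and it is the technical heart of the theorem: a naive application of Theorem \ref{gaussbonnet} to $D$ gives $2\leq \sum_{v\in\partial D}\bigl(1-\sum_{e\in Lk_D(v)}l(e)\bigr)$, in which the boundary edge lengths do not even appear, so it does not yield $l(\sigma)\geq 2$ without further work. The paper handles the case of a single interior vertex directly (there $Lk_D(v)=\partial D$, so $l(\sigma)\geq 2$ is immediate), and otherwise passes to the boundary-layer annulus $A$ and combines Gauss--Bonnet on $D$ and on $A$ with the per-face bound $\sum_{e\in\partial f}l(e)\leq 1$ to prove $l(\partial_1A)\geq l(\partial_2A)+2$ (Lemma \ref{annulus}); this calibration is what produces the threshold $2$. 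Without carrying out such a computation your argument stops short of the conclusion. (Your flagness reduction is fine in outline, but note the base case should be ordered as in the paper: first rule out interior vertices in a minimal filling of a $3$-cycle via the same length estimates --- a $3$-cycle has length at most $3/2<2$ --- rather than via fullness of the link image, which again is not available.)
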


In order to prove this theorem, we will have to study the structure of diagrams over a systolic-by-function complex. A \textit{diagram} $\Delta$ in $X$ is a simplicial map $\varphi:M \to X$. If $M$ is a simplicial structure of a $2$-dimensional disk, we say that $\Delta$ is a \textit{disk diagram}. A simplicial map is called \textit{nondegenerate} if it is injective in every simplex.

\begin{lem}[\cite{JS}, Lemma 1.6]\label{existdiagram} Let $X$ be a simplicial complex, and $\sigma$ a homotopically trivial cycle in $X$. Then there exists a nondegenerate disk diagram $\varphi: D \to X$, which maps the boundary of $D$ isomorphically onto $\sigma$.
\end{lem}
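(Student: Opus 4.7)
The plan has two parts: first, produce some simplicial disk diagram filling $\sigma$, and second, reduce it to a nondegenerate one by local surgery. For the first part, since $\sigma$ is homotopically trivial in $X$ there exists a continuous extension $f \colon D^2 \to |X|$ of the inclusion of $|\sigma|$. Fixing on $\partial D^2$ the combinatorial triangulation given by $\sigma$ and applying iterated barycentric subdivision in the interior, the relative simplicial approximation theorem yields a simplicial triangulation $M_0$ of $D^2$ together with a simplicial map $\varphi_0 \colon M_0 \to X$ whose restriction to $\partial M_0$ is the combinatorial identification $\partial M_0 \cong \sigma$. This already gives a disk diagram, though it need not be nondegenerate.

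For the second part, among all simplicial disk diagrams $\varphi \colon M \to X$ extending $\sigma$ I would choose one for which the number of $2$-simplices of $M$ is minimal, and claim that such a $\varphi$ must be nondegenerate. Suppose not; then some $2$-simplex of $M$ is collapsed, so there is an edge $[u,v]$ of $M$ with $\varphi(u) = \varphi(v)$. Since $\varphi|_{\partial M}$ is a combinatorial isomorphism onto the simple cycle $\sigma$, the edge $[u,v]$ must be interior, hence shared by exactly two distinct $2$-simplices $[u,v,a]$ and $[u,v,b]$. Because $\varphi$ is simplicial and $\varphi(u) = \varphi(v)$, the edges $[u,a]$ and $[v,a]$ have the same image in $X$, and likewise $[u,b]$ and $[v,b]$. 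I then perform the surgery that contracts the closed star of $[u,v]$: identify $u$ with $v$ into a single vertex $x$, identify the parallel edge pairs $[u,a] \sim [v,a]$ and $[u,b] \sim [v,b]$, and discard the two collapsed triangles. The quotient $M'$ is again a triangulated $2$-disk with the same boundary as $M$ and strictly fewer $2$-simplices, and $\varphi$ factors through $M'$ as a simplicial map still extending $\sigma$. This contradicts minimality, so the minimizer $\varphi$ must be nondegenerate.

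The main obstacle I expect is checking that the local surgery always produces an honest simplicial $2$-disk, rather than a $\Delta$-complex or a pinched pseudo-manifold. If $a$ or $b$ happens to lie on $\partial M$, identifying $u$ with $v$ could in principle pinch the disk into a wedge; and coincidences such as $[u,p,q]$ and $[v,p,q]$ both being $2$-simplices of $M$ could produce duplicated $2$-simplices in the quotient. Both issues can be controlled by a preparatory barycentric subdivision of $M_0$ that separates the problematic configurations and forces every subsequent surgery to take place strictly in the interior; after such a subdivision, the simpliciality of $\varphi$ guarantees that all forced identifications are compatible with a genuine simplicial structure on the quotient. Since $M$ is finite, the minimization terminates and produces the required nondegenerate diagram.
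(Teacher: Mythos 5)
The paper does not prove Lemma \ref{existdiagram}: it is quoted verbatim from \cite{JS}, Lemma 1.6, so there is no internal proof to compare against. Your strategy --- produce some filling diagram by relative simplicial approximation applied to a nullhomotopy of $\sigma$, then take a diagram minimizing the number of $2$-simplices and show a degenerate triangle forces a surgery that reduces area --- is the standard argument, and the surgery you describe is exactly the one the paper itself sketches immediately after Definition \ref{minimal} to observe that minimal filling diagrams are nondegenerate. So the outline is the right one.

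The gap is precisely where you locate it, but the repair you propose does not work. A preparatory barycentric subdivision of the initial diagram $M_0$ gives no control whatsoever over the minimizer, since minimality is taken over \emph{all} filling diagrams of $\sigma$; and even if one only minimizes within the family reachable from $M_0$ by surgeries, a single contraction already destroys the separation that subdivision buys (the merged vertex $x$ inherits every neighbour of both $u$ and $v$, so new interior edges between boundary vertices and new empty triangles can be created). The two failure modes in fact need different, and genuinely different, treatments. Pinching is ruled out not by subdivision but by the hypothesis you already have on the boundary: $\varphi|_{\partial M}$ is injective on vertices and $\varphi(u)=\varphi(v)$, so at most one of $u,v$ lies on $\partial M$, and contracting an interior edge with at least one interior endpoint keeps a disk. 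The doubled-edge and duplicated-triangle problem --- a common neighbour $w$ of $u$ and $v$ with $[u,v,w]$ not a face, or a pair of triangles $[u,p,q]$, $[v,p,q]$ --- is a real obstruction to the quotient being a simplicial complex, and it is not ``controlled'' by any preliminary subdivision; it must be excluded by a separate argument, e.g.\ by showing that such a configuration yields an empty triangle through $[u,v]$ whose $\varphi$-image degenerates onto a single edge of $X$, and that the subdisk it bounds can be replaced by a single degenerate triangle, contradicting minimality (or by an innermost-configuration induction, which is essentially what \cite{JS} do). As written, your proof asserts rather than establishes that ``all forced identifications are compatible with a genuine simplicial structure on the quotient,'' and without the additional argument that assertion is false.
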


Such a diagram is called a \textit{filling diagram} for $\sigma$. In a simply connected length complex, the previous lemma implies that every cycle has a filling diagram. To understand these diagrams, we will recall some basic notions of combinatorial curvature.

Let $X$ be a $2$-dimensional length complex. If $v$ is a vertex of $X$, its curvature is defined as 
$$\kappa(v) = 2-\chi(Lk_X(v)) - \sum_{e\in Lk_X(v)} l(e).$$
Here $\chi(Lk_X(v))$ denotes the Euler characteristic of the link of $v$. We define the curvature of
a $2$-simplex $f$ of $X$ as
$$\kappa(f) = \left(\sum_{e\in\partial f}l(e)\right) - 1,$$
where $\partial f$ is the boundary of $f$ and the sum is over its three edges. Note that the curvature of a face is always nonpositive. The following well known result can be found in \cite{BB,Wi}.

\begin{thm}[Combinatorial Gauss-Bonnet Theorem]\label{gaussbonnet}
Let $X$ be a $2$-dimensional length complex. Then
$$\sum_{f\in \text{faces}(X)} \kappa(f) + \sum_{v\in \text{vertices}(X)} \kappa(v) = 2\chi(X).$$
\end{thm}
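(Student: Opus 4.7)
My plan is to prove the identity by purely combinatorial double counting, after first fixing the convention that every edge of $Lk_X(v)$ inherits its length from the edge of the corresponding $2$-simplex opposite to $v$. (This is the only reasonable reading under which $l$ is defined on links at all, and it is the convention under which the systolic case $l \equiv 1/3$ recovers the classical formula: each equilateral triangle contributes $0$ and each vertex of a flat $6$-cycle link contributes $0$.) Let $|V|$, $|E|$, $|F|$ denote the number of vertices, edges, and $2$-simplices of $X$, so that $\chi(X) = |V| - |E| + |F|$.

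First I would unwrap the two sums straight from the definitions of the curvatures:
$$\sum_{f\in\text{faces}(X)} \kappa(f) \;=\; \sum_{f}\sum_{e\in\partial f} l(e) \;-\; |F|,$$
$$\sum_{v\in\text{vertices}(X)} \kappa(v) \;=\; 2|V| \;-\; \sum_v \chi(Lk_X(v)) \;-\; \sum_v \sum_{e\in Lk_X(v)} l(e).$$

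The heart of the argument is to show that the two length-sums agree and therefore cancel when added. For each incidence $(v,f)$ with $v$ a vertex of the $2$-simplex $f$, there is exactly one edge of $Lk_X(v)$ coming from $f$, whose length equals $l$ of the edge of $\partial f$ opposite to $v$; conversely, each of the three edges of $\partial f$ is opposite to exactly one vertex of $f$. Counting the set of incidences $\{(v,f) : v \in f\}$ weighted by the relevant length in two ways therefore yields
$$\sum_v \sum_{e\in Lk_X(v)} l(e) \;=\; \sum_f \sum_{e\in\partial f} l(e).$$

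It remains to evaluate $\sum_v \chi(Lk_X(v))$. Since $Lk_X(v)$ is a $1$-dimensional complex whose vertices are the edges of $X$ incident to $v$ and whose edges are the $2$-simplices of $X$ containing $v$, the standard incidence counts $\sum_v \deg_1(v) = 2|E|$ and $\sum_v \deg_2(v) = 3|F|$ give $\sum_v \chi(Lk_X(v)) = 2|E| - 3|F|$. Substituting back everything collapses to
$$\sum_f \kappa(f) + \sum_v \kappa(v) \;=\; 2|V| - |F| - (2|E| - 3|F|) \;=\; 2(|V|-|E|+|F|) \;=\; 2\chi(X).$$
The only conceptually delicate point I foresee is articulating the link-length convention cleanly; after that the argument is elementary accounting. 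In particular, it uses neither simple-connectedness, nor the bound $l \leq 1/2$, nor the triangle inequality, so the identity is valid for any $2$-dimensional length complex.
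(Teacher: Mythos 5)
Your proof is correct. Note, however, that the paper does not prove this statement at all: it quotes it from the references [BB, Wi] (Ballmann--Buyalo and Wise), adding only the remark that its normalization differs from theirs by the factor $\pi$ (the ``angle'' of $f$ at $v$ being $\pi$ times the length of the edge of $f$ opposite $v$). So your contribution is a self-contained verification, and it is essentially the standard proof of combinatorial Gauss--Bonnet: assign to each corner $(v,f)$ the weight $l$ of the opposite edge, observe that the corner weights appear once with sign $+$ in $\sum_f \kappa(f)$ and once with sign $-$ in $\sum_v \kappa(v)$, and reduce the rest to the incidence counts $\sum_v \#\{e\ni v\}=2|E|$, $\sum_v \#\{f\ni v\}=3|F|$, giving $2|V|-2|E|+2|F|=2\chi(X)$. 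All steps check out, including the bijection between edges of $Lk_X(v)$ and $2$-simplices containing $v$ (simplices are determined by their vertex sets). One small simplification: the ``convention'' you single out as delicate is automatic under the paper's definition of the link as a subcomplex of $X$ --- the edges of $Lk_X(v)$ literally are the edges of $X$ opposite $v$ in the triangles containing $v$, so they carry their own lengths and no extra choice is needed. Your closing observation is also right and worth keeping: the identity is pure Euler-characteristic bookkeeping, using neither simple connectivity nor the bound $l\le\tfrac12$ nor the triangle inequality, and it does not require $X$ to be pure $2$-dimensional.
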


Notice that the formula is not exactly equal to the one appearing in \cite{BB,Wi}. The size of an angle is the length of the side opposite to it, multiplied by $\pi$. We omit the factor of $\pi$ for simplicity.

\begin{definition}\label{minimal}
Let $\sigma$ be a cycle in a simplicial complex $X$. A filling diagram $\varphi:D\to X$ for $\sigma$ is \textit{minimal} if $D$ has the least amount of $2$-simplices among all filling diagrams for $\sigma$. 
\end{definition}

Observe that if $\varphi:D\to X$ is a minimal filling diagram for a cycle $\sigma$, it is nondegenerate: if an edge $e$ were mapped to a vertex, we could take the two triangles containing $e$, delete the interior of their union and glue the remaining four edges, thus obtaining a filling diagram for $\sigma$ with less $2$-simplices. Given a nondegenerate diagram $\varphi:M\to X$, where $X$ is a length complex, we can pullback the length function to $M$, so that $M$ is a length complex itself. The next two lemmas follow the ideas from \cite[Lemmas 2.5 and 2.6]{BM}.

\begin{lem}\label{nointerior}
Let $X$ be a large length complex and $\sigma$ a cycle in $X$ of length less than 2. Then there exists a filling diagram $\varphi:D\to X$ for $\sigma$ such that $D$ has no interior vertices. 
\end{lem}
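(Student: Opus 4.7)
My plan is to take a minimal filling diagram and use the Combinatorial Gauss--Bonnet Theorem together with the largeness of $X$ to rule out the existence of interior vertices. Since $l(\sigma)<2$ and $X$ is large, $\sigma$ is not full, so $\sigma$ bounds a disk in $X$; Lemma \ref{existdiagram} then lets me fix a filling diagram $\varphi\colon D\to X$ for $\sigma$ with the smallest possible number of $2$-simplices. By the observation preceding the lemma, minimality makes $\varphi$ nondegenerate, so I pull back the length function of $X$ along $\varphi$ and treat $D$ as a length complex with $\partial D\cong\sigma$ and $l(\partial D)=l(\sigma)<2$.

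The central step is to prove that $l(Lk_D(v))\geq 2$ for every interior vertex $v$ of $D$. The restriction of $\varphi$ to $Lk_D(v)$ is a simplicial map into $Lk_X(\varphi(v))$, and minimality of $D$ forces this restriction to be injective with image a full cycle of $Lk_X(\varphi(v))$. Indeed, if two vertices of $Lk_D(v)$ were identified in $X$, the two corresponding triangles at $v$ in $D$ could be folded together, producing a filling with strictly fewer $2$-simplices; and if the image cycle admitted a diagonal in $Lk_X(\varphi(v))$, flagness would give a $2$-simplex of $X$ along which the star of $v$ in $D$ could be re-triangulated, again decreasing the simplex count. Since $X$ is large it is locally large, so $Lk_X(\varphi(v))$ is large, and every full cycle in it has length at least $2$, giving the required bound.

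Suppose now for contradiction that $D$ has an interior vertex. The first length-function axiom gives $\kappa(f)\leq 0$ for every $2$-simplex $f$, and the preceding step gives $\kappa(v)=2-l(Lk_D(v))\leq 0$ for every interior vertex $v$. By Theorem \ref{gaussbonnet},
$$\sum_{v\in\partial D}\kappa(v)\;\geq\;2\chi(D)\;=\;2.$$
Expanding each boundary-vertex curvature as $1-l(Lk_D(v))$ and invoking the triangle-inequality axiom of the length function at each corner of $\partial D$, the left-hand side is bounded above in terms of the lengths of the boundary edges, yielding $l(\sigma)\geq 2$ and contradicting the hypothesis. Therefore $D$ has no interior vertices.

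\textbf{Main obstacle.} I expect the main difficulty to lie in the second paragraph: the two local re-triangulations used to contradict minimality (folding two identified triangles and splitting the star along a new diagonal edge of $X$) must be carried out with care to ensure that the outcome is still an honest simplicial filling diagram whose boundary maps isomorphically onto $\sigma$, and that the total number of $2$-simplices strictly decreases. A secondary subtlety is in the last step, where the boundary inequality coming from Gauss--Bonnet has to be translated into the quantitative length bound $l(\sigma)\geq 2$ via repeated use of the triangle inequality at each boundary corner of $D$.
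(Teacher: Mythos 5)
Your proposal takes a genuinely different route from the paper (the paper proves this lemma by induction on $|\sigma|$: since $l(\sigma)<2$ and $X$ is large, $\sigma$ is not full, so it has a diagonal; the diagonal splits $\sigma$ into two shorter cycles, which are filled by induction and glued along the diagonal), but as written it has several real gaps. First, Lemma \ref{existdiagram} only applies to homotopically trivial cycles, and largeness of $X$ does not give simple connectivity; your justification ``$\sigma$ is not full, so $\sigma$ bounds a disk in $X$'' is a non sequitur, and this matters because the lemma is later applied inside links $Lk_X(\varphi(v))$, which need not be simply connected. In the paper the null-homotopy is not assumed but produced by the inductive construction itself, so you cannot start from a minimal filling diagram without first supplying that argument.

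Second, your central step --- that minimality forces $\varphi|_{Lk_D(v)}$ to be injective with full image, hence $l(Lk_D(v))\geq 2$ --- is not established by the moves you describe. The ``fold'' when two link vertices have the same image does not in general yield a smaller simplicial disk diagram whose boundary still maps isomorphically onto $\sigma$ (the quotient need not be a disk, and the identified vertices may lie on $\partial D$); and the ``re-triangulation'' when the image cycle has a diagonal does not reduce the number of $2$-simplices: to exploit the diagonal you must re-fill the cut-off portion of the star with strictly fewer triangles, and producing such a filling is essentially the statement of this very lemma applied in $Lk_X(\varphi(v))$ --- indeed this is exactly how the paper's Lemma \ref{nicediagram} argues, \emph{using} Lemma \ref{nointerior}, so your route risks circularity. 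Finally, the concluding step is also not routine: from $\sum_{v\in\partial D}\kappa(v)\geq 2$ one cannot deduce $l(\sigma)\geq 2$ by corner-wise triangle inequalities --- a single $2$-simplex with all edges of length $\tfrac{1}{3}$ has boundary-curvature sum exactly $2$ but boundary length $1$. The genuine passage from curvature estimates to a boundary-length bound is the content of Lemma \ref{annulus} (via the boundary complex) or, when $D$ is a cone over its boundary, the identification $Lk_D(v)=\partial D$; neither is available here for free, especially since a short $\sigma$ may well admit diagonals in $D$, violating the hypotheses of Lemma \ref{annulus}. The paper's elementary induction avoids all of these issues.
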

\begin{proof}
We proceed by induction  on $|\sigma|$. If $|\sigma| = 3$, then the result follows by flagness. Now suppose $|\sigma|>3$. Since $X$ is large, $\sigma$ cannot be full. Then $\sigma$ has a diagonal $e$ that connects two nonconsecutive vertices of $\sigma$. This edge subdivides $\sigma$ in two paths, both with less than $|\sigma|-1$ edges. We call them $\sigma_1$ and $\sigma_2$. Attaching $e$ to both $\sigma_1$ and $\sigma_2$, we get two cycles with less number of edges than $\sigma$. By the triangle inequality $l(\sigma_i\cup e)\leq l(\sigma)$ for $i=1,2$. By inductive hypothesis, there exist filling diagrams without interior vertices for both cycles. Gluing these two diagrams along the two edges mapped to $e$ we obtain the desired diagram for $\sigma$. 
\end{proof}

\begin{lem}\label{nicediagram}
Let $\sigma$ be a homotopically trivial cycle in a locally large length complex $X$. Then for any minimal filling diagram $\varphi:D\to X$ for $\sigma$, $D$ is locally large when considered with the pullback length.
\end{lem}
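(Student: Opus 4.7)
The plan is to verify that for every vertex $v$ of $D$, the link $Lk_D(v)$ with the pulled-back length function is large. Since $D$ is a two-dimensional simplicial disk, $Lk_D(v)$ is one-dimensional: an arc if $v \in \partial D$ and a simple cycle if $v$ is interior. Arcs contain no cycles and no triples of pairwise-adjacent vertices, so they are automatically large, and no work is needed at boundary vertices. For an interior vertex $v$, the only full cycle of $Lk_D(v)$ is $Lk_D(v)$ itself, so the task reduces to showing $l(Lk_D(v)) \geq 2$; flagness is then automatic because the bound $l(e) \leq 1/2$ forces $|Lk_D(v)| \geq 4$.

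Fix an interior vertex $v$ of $D$, and recall that by minimality $\varphi$ is nondegenerate. The key claim I would prove is that the restriction of $\varphi$ to $Lk_D(v)$ is an embedding whose image is a full cycle in $Lk_X(\varphi(v))$. Granting this, local largeness of $X$ gives $l(\varphi(Lk_D(v))) \geq 2$, and since $D$ carries the pulled-back length, $l(Lk_D(v)) \geq 2$ follows at once.

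The key claim I would establish by contradiction using minimality. Failure splits into two cases: (i) \emph{non-injectivity}, where distinct vertices $a \neq b$ of $Lk_D(v)$ satisfy $\varphi(a) = \varphi(b)$; and (ii) \emph{non-fullness}, where non-adjacent vertices $a, b \in Lk_D(v)$ have $\{\varphi(v), \varphi(a), \varphi(b)\}$ spanning a $2$-simplex of $X$. In either case, the edges $\{v, a\}$ and $\{v, b\}$ together with a sub-arc of $Lk_D(v)$ from $a$ to $b$ bound a subdisk $D' \subseteq D$. One then performs a surgery on $D$: collapsing the two folded edges in case (i), or excising $D'$ and re-filling a shorter sub-cycle using the new $2$-simplex $\{\varphi(v), \varphi(a), \varphi(b)\}$ in case (ii). The resulting diagram is again a filling of $\sigma$ but contains strictly fewer $2$-simplices, contradicting minimality of $D$.

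The main obstacle is the face-counting in the surgery step, especially in case (ii), where one inserts a new triangle and then must re-fill a strictly shorter cycle with fewer triangles overall. Here the triangle inequality built into the length function and the bound $l(e) \leq 1/2$ are essential: they force the surgered boundary cycle to be short enough for an efficient filling, following the pattern of \cite[Lemma 2.6]{BM}. Once this triangle-counting is secured, the rest of the argument is formal, and largeness of $Lk_D(v)$ follows.
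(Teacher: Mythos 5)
Your key claim---that minimality forces $\varphi|_{Lk_D(v)}$ to be an embedding onto a \emph{full} cycle of $Lk_X(\varphi(v))$---is false, so the proof cannot be completed along these lines. Counterexample: let $X$ be the simplicial cone with apex $p$ over the graph $G$ consisting of a hexagon $x_1x_2\dots x_6$ together with the diagonal $x_1x_4$, with length $\frac{1}{2}$ on all edges of $G$ and $\frac{1}{4}$ on the cone edges. This is a length function, and $X$ is locally large: $Lk_X(p)=G$, whose only full cycles are the two squares of length $2$, and all other links are trees. Take $\sigma$ to be the hexagon. Every $2$-simplex of $X$ contains $p$, so a nondegenerate filling of $\sigma$ without interior vertices is impossible, and the identity $F=(\text{boundary edges}-2)+2\,(\text{interior vertices})$ for simplicial disks shows every filling has at least $6$ triangles, with $6$ forcing exactly one interior vertex, necessarily sent to $p$ (the only vertex adjacent to all $x_i$). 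Hence the $6$-triangle cone is a minimal filling, and the link of its interior vertex maps isomorphically onto the hexagon inside $Lk_X(p)$, which is \emph{not} full (it has the diagonal $x_1x_4$); the lemma's conclusion holds there only because that non-full image still has length $3\geq 2$. The same counting identity explains why your case (ii) surgery can never contradict minimality: excising the fan over an arc $\alpha$ of $Lk_D(v)$ with $k$ edges removes $k$ triangles, but the region to be re-filled is a $(k+2)$-gon, which no simplicial disk can fill with fewer than $k$ triangles; your proposed filling (the triangle on $\{\varphi(v),\varphi(a),\varphi(b)\}$ plus a no-interior-vertex filling of the remaining $(k+1)$-gon) has exactly $1+(k-1)=k$. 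The count never strictly drops, and no bookkeeping with the triangle inequality or the bound $l(e)\leq\frac{1}{2}$ changes this.

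The mechanism that does work, and is the one the paper uses, replaces the \emph{entire closed star} of a bad vertex rather than doing fan-by-fan surgery. Suppose some interior vertex $v$ has $Lk_D(v)$ not large; since $Lk_D(v)$ is a cycle and each edge has length at most $\frac{1}{2}$, this happens exactly when $l(Lk_D(v))<2$. Its image is then a closed edge path of length less than $2$ in the large complex $Lk_X(\varphi(v))$, and Lemma \ref{nointerior} applied there produces a filling with no interior vertices, i.e.\ with $n-2$ triangles where $n=|Lk_D(v)|$. Swapping this disk in for the closed star of $v$, which has $n$ triangles, strictly decreases the number of $2$-simplices, contradicting minimality. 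The gain of $2$ comes precisely from deleting the interior vertex $v$ itself, which your local surgeries never achieve, and the argument needs neither injectivity nor fullness of $\varphi|_{Lk_D(v)}$: only the length bound, which is exactly what the contradiction hypothesis supplies. (Your case (i) ``collapse'' has separate unresolved issues, e.g.\ when the chosen arc of $Lk_D(v)$ meets $\partial D$ the excision-and-identification need not return a simplicial disk with the same boundary, but case (ii) is already fatal to the proposal.)
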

\begin{proof}
Let $\varphi:D\to X$ be a minimal filling diagram for $\sigma$. Suppose there is an interior vertex $v$ of~ $D$ such that $Lk_D(v)$ is not large. Since $D$ is simplicial, $Lk_D(v)$ is a full cycle in $D$ (that has length less than $2$). Consider $\varphi(Lk_D(v))$ as a cycle in $Lk_X(\varphi(v))$. Since $X$ is locally large, $Lk_X(\varphi(v))$ is a large length complex. Thus, by Lemma \ref{nointerior}, there is a filling diagram $\psi: D'\to Lk_X(\varphi(v))\subset X$ for $\varphi(Lk_D(v))$ with no interior vertices. There are $|Lk_D(v)|$ closed $2$-simplices in $D$ that contain $v$. Since $\psi$ is a filling diagram for $\varphi(Lk_D(v))$ and $D'$ has no interior vertices, the number of $2$-simplices in $D'$ is $|\varphi(Lk_D(v))|-2<|Lk_D(v)|$. Therefore, if we replace the set of closed $2$-simplices of $D$ that contain $v$ by this new diagram, we obtain a filling diagram for $\sigma$ with less $2$-simplices, which is a contradiction. Hence $D$ is locally large.
\end{proof}

\begin{remark}
If $\varphi:M\to X$ is a nondegenerate disk diagram, then being locally large is equivalent to $\kappa(v)\leq 0$ for every interior vertex $v$ of $M$. This is because, for an interior vertex $v$, $\kappa(v) = 2-\sum_{e\in Lk_M(v)} l(e)$.
\end{remark}

Let $\varphi : D\to X$ be a disk diagram. The \textit{boundary layer} $L$ of $D$ consists of every vertex in the boundary of $D$, every edge incident to a vertex in the boundary, and every open $2$-simplex whose closure has a vertex in the boundary. Here we consider open $2$-simplices because we do not want edges that are not incident to a vertex in the boundary to be part of the boundary layer. Note that $L$ is usually not a simplicial complex. If $D$ has at least two interior vertices, and no edge connecting nonconsecutive vertices of the boundary, we define the following complex. Consider the simplicial complex $A$ constructed by taking the disjoint union of the vertices, edges and $2$-simplices (now closed) of the boundary layer of $D$ and identifying the boundaries of the closed $2$-simplices but only in the vertices and edges of the boundary layer of $D$ (see Figure \ref{figone}). Since $D$ has more than one interior vertex and no edge connecting nonconsecutive vertices of the boundary, $A$ is an annulus without interior vertices. We call $A$ the \textit{boundary complex} of $D$. It has two boundary components $\partial_1A$ and $\partial_2A$, the first of which is isomorphic to $\partial D$. If $D$ is a length complex, then $A$ is a length complex with the induced length. Note that if $D$ has exactly two interior vertices, its boundary complex $A$ is not a simplicial complex, since it has a double edge. However, it is easy to see that all the definitions and results can be adapted to this case.

\begin{remark}
If the disk had only one interior vertex, its boundary complex would be the disk itself. This is why that case is excluded from the previous definition.
\end{remark}

\begin{figure}[h]
\begin{tikzpicture}[scale=0.6]

\tikzstyle{point}=[circle,thick,draw=black,fill=black,inner sep=0pt,minimum width=4pt,minimum height=4pt]
    \node (1)[point] at (0,0) {};
    \node (2)[point] at (1,2) {};
    \node (3)[point] at (1.5,0.5) {};
    \node (4)[point] at (1,-2) {};
    \node (5)[point] at (3,2) {};
    \node (6)[point] at (4,0) {};
    \node (7)[point] at (3.5,-2.2) {};
    \node (8)[point] at (4,3.5) {};
    \node (9)[point] at (5,2.5) {};
    \node (10)[point] at (6,1) {};
    \node (11)[point] at (6.5,-2) {};
    \node (12)[point] at (6,3.5) {};
    \node (13)[point] at (6.5,2.5) {};
    \node (14)[point] at (7.5,0) {};
    \node (15)[point] at (8.3,2) {};
    
    \draw [fill=black, fill opacity=0.2] (0,0)--(1,2)--(3,2)--(4,3.5)--(6,3.5)--(8.3,2)--(7.5,0)--(6.5,-2)--(3.5,-2.2)--(1,-2)--cycle;
    
    \draw[ultra thick] (1)--(2);
    \draw[ultra thick] (1)--(3);
    \draw[ultra thick] (1)--(4);
    \draw[ultra thick] (2)--(3);
    \draw[ultra thick] (2)--(5);
    \draw[ultra thick] (3)--(4);
    \draw[ultra thick] (3)--(5);
    \draw[ultra thick] (3)--(6);
    \draw[ultra thick] (3)--(7);
    \draw[ultra thick] (4)--(7);
    \draw[ultra thick] (5)--(6);
    \draw[ultra thick] (5)--(8);
    \draw[ultra thick] (5)--(9);
    \draw[ultra thick] (6)--(7);
    \draw[ultra thick] (6)--(9);
    \draw[ultra thick] (6)--(10);
    \draw[ultra thick] (7)--(10);
    \draw[ultra thick] (7)--(11);
    \draw[ultra thick] (8)--(9);
    \draw[ultra thick] (8)--(12);
    \draw[ultra thick] (9)--(10);
    \draw[ultra thick] (9)--(12);
    \draw[ultra thick] (9)--(13);
    \draw[ultra thick] (10)--(11);
    \draw[ultra thick] (10)--(13);
    \draw[ultra thick] (10)--(14);
    \draw[ultra thick] (11)--(14);
    \draw[ultra thick] (12)--(13);
    \draw[ultra thick] (12)--(15);
    \draw[ultra thick] (13)--(14);
    \draw[ultra thick] (13)--(15);
    \draw[ultra thick] (14)--(15);
    
    \node[] at (2,3) {$D$};

    \begin{scope}[xshift=12cm]
    \tikzstyle{point}=[circle,thick,draw=black,fill=black,inner sep=0pt,minimum width=4pt,minimum height=4pt]
    \node (1)[point] at (0,0) {};
    \node (2)[point] at (1,2) {};
    \node (3)[point] at (1.5,0.5) {};
    \node (4)[point] at (1,-2) {};
    \node (5)[point] at (3,2) {};
    \node (61)[point] at (4,0.5) {};
    \node (62)[point] at (4,-0.5) {};
    \node (7)[point] at (3.5,-2.2) {};
    \node (8)[point] at (4,3.5) {};
    \node (9)[point] at (5,2.5) {};
    \node (10)[point] at (6,1) {};
    \node (11)[point] at (6.5,-2) {};
    \node (12)[point] at (6,3.5) {};
    \node (13)[point] at (6.5,2.5) {};
    \node (14)[point] at (7.5,0) {};
    \node (15)[point] at (8.3,2) {};
    
    \draw [fill=black, fill opacity=0.2] (0,0)--(1,2)--(3,2)--(4,3.5)--(6,3.5)--(8.3,2)--(7.5,0)--(6.5,-2)--(3.5,-2.2)--(1,-2)--(1.5,0.5)--(4,-0.5)--(6,1)--(6.5,2.5)--(5,2.5)--(4,0.5)--(1.5,0.5)--cycle;
    \draw [fill=black, fill opacity=0.2] (0,0)--(1.5,0.5)--(1,-2)--cycle;
    
    \draw[ultra thick] (1)--(2);
    \draw[ultra thick] (1)--(3);
    \draw[ultra thick] (1)--(4);
    \draw[ultra thick] (2)--(3);
    \draw[ultra thick] (2)--(5);
    \draw[ultra thick] (3)--(4);
    \draw[ultra thick] (3)--(5);
    \draw[ultra thick] (3)--(61);
    \draw[ultra thick] (3)--(62);
    \draw[ultra thick] (3)--(7);
    \draw[ultra thick] (4)--(7);
    \draw[ultra thick] (5)--(61);
    \draw[ultra thick] (5)--(8);
    \draw[ultra thick] (5)--(9);
    \draw[ultra thick] (62)--(7);
    \draw[ultra thick] (61)--(9);
    \draw[ultra thick] (62)--(10);
    \draw[ultra thick] (7)--(10);
    \draw[ultra thick] (7)--(11);
    \draw[ultra thick] (8)--(9);
    \draw[ultra thick] (8)--(12);
    \draw[ultra thick] (9)--(12);
    \draw[ultra thick] (9)--(13);
    \draw[ultra thick] (10)--(11);
    \draw[ultra thick] (10)--(13);
    \draw[ultra thick] (10)--(14);
    \draw[ultra thick] (11)--(14);
    \draw[ultra thick] (12)--(13);
    \draw[ultra thick] (12)--(15);
    \draw[ultra thick] (13)--(14);
    \draw[ultra thick] (13)--(15);
    \draw[ultra thick] (14)--(15);
    
    \node[] at (2,3) {$A$};
    \end{scope}
\end{tikzpicture}
\caption{A disk $D$ and its boundary complex $A$}
\label{figone}
\end{figure}
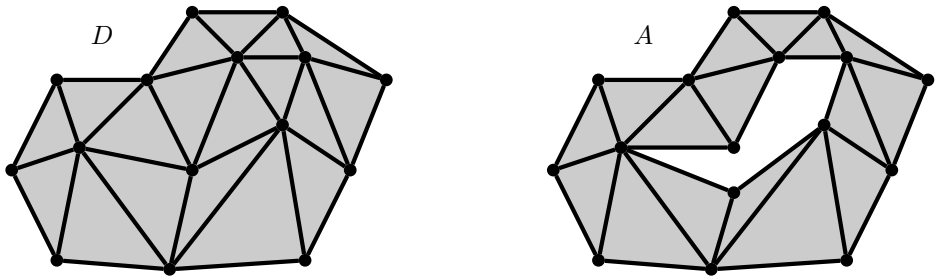

\begin{lem}\label{annulus}
Let $\varphi:D \to X$ be a minimal filling diagram for a cycle $\sigma$ in a locally large length complex $X$, where $D$ has at least two interior vertices, and no edge connecting nonconsecutive vertices of the boundary. Let $A$ be the boundary complex of $D$. Then:
$$l(\partial_1A) \geq l(\partial_2A)+2.$$
\end{lem}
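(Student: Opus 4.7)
The plan is to work on the disk $D$ itself (rather than directly on the annulus $A$), summing the local-largeness inequality at each interior vertex of $D$ and then bounding the resulting global quantity from above by the angle-sum condition on each $2$-simplex; Euler's formula for $D$ then matches the defect exactly.

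I would first classify the $2$-simplices of $D$ by the number of their vertices lying on $\partial D$, writing $F_k$ for the number of $2$-simplices with exactly $k \in \{0,1,2,3\}$ boundary vertices, and let $V_b, V_i$ denote the numbers of boundary and interior vertices of $D$. The only delicate preliminary is the claim $F_3 = 0$: if $|\partial D| \geq 4$, any triangle with three boundary vertices would contain an edge between nonconsecutive vertices of $\partial D$, contradicting the hypothesis; while if $|\partial D| = 3$, such a triangle would equal $\partial D$ itself and fill the disk, forcing $V_i = 0$ against $V_i \geq 2$. By Lemma \ref{nicediagram}, $D$ with the pullback length is locally large, so for every interior vertex $w$ the link $Lk_D(w)$ is a full cycle in itself and has length $\geq 2$. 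Setting $A_D(w) := \sum_{e \in Lk_D(w)} l(e)$, which also equals $\sum_{T \ni w} l(\text{edge of } T \text{ opposite } w)$, this reads $A_D(w) \geq 2$, and summing gives $\Theta := \sum_{w \in V_i} A_D(w) \geq 2 V_i$.

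Next I would rewrite $\Theta$ by grouping the contributions by triangle type. Each $F_2$ triangle contributes the length of the $V_b$-$V_b$ edge opposite its unique interior vertex; by the no-shortcut hypothesis this edge lies on $\partial D$, and every edge of $\partial D$ arises this way exactly once, so the $F_2$-part equals $l(\partial_1 A)$. Each $F_1$ triangle contributes the sum of its two mixed edges, and the angle-sum condition $S(T) \leq 1$ in Definition \ref{length} yields $\sum_{T \in F_1}(m_1 + m_2) \leq F_1 - l(\partial_2 A)$ via the bijection between $F_1$ triangles and edges of $\partial_2 A$ that sends $T$ to its interior-interior edge (this correctly accounts for the duplication on $\partial_2 A$ of any interior-interior edge of $D$ lying in two $L$-triangles). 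Each $F_0$ triangle contributes $S(T) \leq 1$, bounded in total by $F_0$. Altogether $2 V_i \leq \Theta \leq l(\partial_1 A) + F_1 + F_0 - l(\partial_2 A)$.

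Finally, Euler's formula for the triangulated disk $D$ gives $F = V_b + 2V_i - 2$; since $F_3 = 0$, each edge of $\partial D$ lies in a unique $F_2$ triangle, so $F_2 = V_b$ and therefore $F_1 + F_0 = 2V_i - 2$. Substituting into the inequality yields $l(\partial_1 A) - l(\partial_2 A) \geq 2$, as desired. The main obstacle is really the preliminary combinatorial cleanup: establishing $F_3 = 0$ (which genuinely uses both the no-shortcut hypothesis and $V_i \geq 2$, with the case $|\partial D| = 3$ requiring a separate argument) and carefully identifying $l(\partial_2 A) = \sum_{T \in F_1} l(p_T)$ in the face of possible duplication of interior-interior edges of $D$ in the boundary complex $A$. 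Once these structural facts are in place, the inequality follows almost immediately from local largeness, the triangle inequality, and Euler.
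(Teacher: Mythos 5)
Your proof is correct, but it reaches the inequality by a more hands-on route than the paper. The paper never classifies the faces of $D$ by their number of boundary vertices: it applies the combinatorial Gauss--Bonnet theorem (Theorem \ref{gaussbonnet}) twice, to $D$ and to the boundary complex $A$, takes twice the disk (in)equality minus the annulus equality, discards the unwanted terms using the sign conditions $\kappa(f)\le 0$ for faces and $\kappa(v)\le 0$ for interior vertices (the latter being local largeness via Lemma \ref{nicediagram}), and then regroups the surviving boundary terms over the faces of $A$ having an edge on $\partial_1A$ resp.\ $\partial_2A$, finishing with the perimeter condition $\sum_{e\in\partial f}l(e)\le 1$. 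You instead stay entirely on $D$: you sum the local-largeness inequality $\sum_{e\in Lk_D(w)}l(e)\ge 2$ over interior vertices, regroup the resulting quantity triangle by triangle according to the number of boundary vertices, and close the count with Euler's formula $F=V_b+2V_i-2$. The two computations are the same curvature bookkeeping (Gauss--Bonnet is Euler plus a double count), but your version trades the annulus computation for the explicit combinatorial facts $F_3=0$, $F_2=V_b$, hence $F_0+F_1=2V_i-2$, which you correctly derive from the no-shortcut hypothesis and $V_i\ge 2$; the paper's version hides essentially the same facts in the assertions that every face of $A$ has exactly one edge on $\partial_1A$ or $\partial_2A$ and that the number of such faces equals $|\partial_1A|$ resp.\ $|\partial_2A|$ (note its $F_1,F_2$ are your $F_2,F_1$). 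You also make explicit the identification of $l(\partial_2A)$ with the sum of the lengths of the interior--interior edges of the boundary-layer triangles counted with multiplicity, which both arguments need. One small remark: your closing sentence credits the triangle inequality, but it is not actually used in this lemma---only local largeness and the perimeter bound enter (the triangle inequality is what drives Lemma \ref{nointerior} and the shortcut argument in Theorem \ref{theorem}).
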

\begin{proof}
We apply Gauss-Bonnet to $D$ and $A$ to obtain (after simplifying the notation of the indices of the sums)
$$2 = \sum_{f\in D} \kappa(f) + \sum_{v\in D} \kappa(v) \leq \sum_{f\in A} \kappa(f) + \sum_{v\in \partial D} \kappa(v) = \sum_{f\in A} \kappa(f) + \sum_{v\in \partial_1A} \kappa(v),$$
$$0 = \sum_{f\in A} \kappa(f) + \sum_{v\in \partial_1A\cup\partial_2A} \kappa(v).$$
The first and last equalities hold because the Euler characteristic of a disk and an annulus are $1$ and $0$ respectively. The first inequality is due to the fact that the curvature of faces and interior vertices is always nonpositive.

By taking the double of the first expression and subtracting the second expression we get
$$4 \leq 2\left(\sum_{f\in A} \kappa(f) + \sum_{v\in \partial_1A}\kappa(v)\right) - \sum_{f\in A} \kappa(f) - \sum_{v\in \partial_1A\cup\partial_2A}\kappa(v) = \sum_{f\in A} \kappa(f) + \sum_{v\in \partial_1A} \kappa(v) - \sum_{v\in\partial_2A} \kappa(v).$$

Observe that the Euler characteristic of the link of a vertex in the boundary is equal to $1$. We note by $F_1$ and $F_2$ the sets of $2$-simplices of $A$ having one edge in $\partial_1A$ and $\partial_2A$ respectively. For a face $f$ in $F_1$ or $F_2$ we denote its three sides by $e_1^f$, $e_2^f$ and $e_3^f$, where $e_1^f$ is the one lying in the corresponding boundary component. We also denote their respective lengths by $l_1^f$, $l_2^f$ and $l_3^f$. Note that the cardinality of $F_i$ is $|\partial_iA|$ for $i=1,2$. From this we have
\begin{equation*} 
\begin{split}
4 & \leq \sum_{f\in A} \left(\left(\sum_{e\in\partial f}l(e)\right) - 1\right) + \sum_{v\in \partial_1A} \left(1 - \sum_{e\in Lk_A(v)} l(e)\right) - \sum_{v\in\partial_2A} \left(1 - \sum_{e\in Lk_A(v)} l(e)\right) \\
& = \sum_{f\in F_1}\left(\left(\sum_{e\in \partial f} l(e)\right) -1 +1+l_1^f-l_2^f-l_3^f \right) + \sum_{f\in F_2}\left(\left(\sum_{e\in \partial f} l(e)\right) -1 -1 -l_1^f+l_2^f+l_3^f \right)\\
& = \sum_{f\in F_1}2l_1^f + \sum_{f\in F_2}\left(-2+2l_2^f+2l_3^f \right)\\
& \leq \sum_{f\in F_1}2l_1^f + \sum_{f\in F_2}-2l_1^f \\
& = 2l(\partial_1A) - 2l(\partial_2A).
\end{split}
\end{equation*}
The first equality is a rearrangement of the terms using the new notation and the remark about the cardinalities of the $F_i$. The last inequality holds because the sum of the lengths of the sides of any $2$-simplex is less than or equal to 1. Dividing both sides by 2, we obtain the desired inequality $l(\partial_1A) \geq l(\partial_2A)+2$.
\end{proof}

We now have all the necessary ingredients to prove Theorem \ref{localtoglobal}.

\begin{proof}[Proof of Theorem \ref{localtoglobal}]
We have to show that every full cycle has length greater than or equal to~ 2, and that $X$ is flag. Let $\sigma$ be a full cycle in $X$. Since $X$ is simply connected, by Lemma \ref{existdiagram} there is a minimal filling diagram for $\sigma$, say $\varphi:D\to X$. We know that $\varphi$ is nondegenerate because it is minimal. Hence by Lemma \ref{nicediagram}, $D$ is locally large.
Since $\sigma$ is full, there are no edges in $D$ connecting nonconsecutive vertices of its boundary, and $D$ has at least one interior vertex. If $D$ has only one interior vertex $v$ we have
$$0\geq \kappa (v) = 2-\sum_{e\in Lk_D(v)} l(e) = 2-l(\sigma).$$
Therefore $l(\sigma)\geq 2$. If $D$ has more than one interior vertex, then we are under the hypotheses of Lemma \ref{annulus}, and $l(\sigma) = l(\partial_1A) \geq 2$.

Now we show that $X$ is flag. We are going to see that it suffices to show that every cycle with three edges spans a $2$-simplex in $X$. Indeed, suppose we have vertices $v_1,\dots,v_n$ that are pairwise connected. If every triangle is filled, we have the $1$-skeleton of an $(n-1)$-simplex in $Lk_X(v_1)$. Since the links of the vertices are flag, $v_1,\dots,v_n$ must span an $n$-simplex in $X$.

Take a cycle $\sigma$ with three edges, and let $\varphi:D\to X$ be a minimal filling diagram for $\sigma$. If $D$ has more than one interior vertex, then by Lemma \ref{annulus}, $l(\sigma)\geq 2$, which is impossible. If $D$ has exactly one interior vertex $v$, then just as before
$$0\geq \kappa (v) = 2-\sum_{e\in Lk_D(v)} l(e) = 2-l(\sigma).$$
Once again, this would imply that $l(\sigma)\geq 2$. So the only possibility is that $D$ has no interior vertices. Hence, $\sigma$ spans a $2$-simplex in $X$.
\end{proof}


\section{The Artin complex}\label{theartincomplex}

In this section we recall the construction of the complex associated to an Artin group described in \cite{CMV}. We follow their description and maintain their notation. The definitions and notations related to complexes of groups are those of \cite[Chapter II.12]{BH}.

Let $A_S$ be an Artin group with generator set $S$ (with $|S|\geq 2$). Take $K$ a simplex of dimension $|S|-1$ and define a simplex of groups over $K$. First, give the simplex $K$ a trivial local group. Simplices of codimension $1$ are in one-to-one correspondence with elements $s_i\in S$, and are denoted by $\Delta_{s_i}$. The simplex $\Delta_{s_i}$ is given the local group $\langle s_i \rangle$. Now every simplex of codimension $k$ is in one-to-one correspondence with a subset of $S$ of cardinality $k$. Given $S'\subset S$ with $|S'|=k$, its corresponding face can be written uniquely as
$$\Delta_{S'} = \cap_{s_i\in S'}\Delta_{s_1}.$$
The simplex $\Delta_{S'}$ is given the local group $A_{S'}$.

Given an inclusion $\Delta_{S''}\subset \Delta_{S'}$ there is a natural inclusion $\psi_{S'S''}:A_{S'}\xrightarrow{} A_{S''}$. Let $\mathcal{P}$ be the poset of standard parabolic subgroups of $A_S$ with the order given by the natural inclusions. Since every standard parabolic subgroup is itself an Artin group \cite{V}, there is a simple morphism $\varphi: G(\mathcal{P}) \xrightarrow{} A_S$, given by inclusion, from the complex of groups to $A_S$. 

\begin{definition}\label{artincomplex} The \textit{Artin complex} associated to $A_S$ is the development $X_S:= D_K(\mathcal{P},\varphi)$ of $\mathcal{P}$ over $K$ along $\varphi$ (\cite[Theorem II.12.18]{BH}).
\end{definition}

In the proof of \cite[Theorem II.12.18]{BH}, an explicit description of $X_S$ is given. The simplicial complex $X_S$ can be defined as
$$X_S:=A_S\times K /\sim,$$
where $(g,x)\sim (g',x')$ if and only if $x=x'$ and $g^{-1}g'$ is in the local group of the smallest simplex of $K$ containing $x$.

The action of $A_S$ in $X_S$ is by simplicial isomorphisms, without inversions and cocompact, with strict fundamental domain $K$. Any simplex $\Delta$ of $X_S$ is in the orbit of exactly one $\Delta_{S'}\subset K$ for some $S'\subset S$. In that case $\Delta$ is said to be \textit{of type} $S'$. We now recall some results from \cite{CMV} about the complex $X_S$.

\begin{lem}[\cite{CMV}, Lemma 4]\label{simplyconnected} Let $A_S$ be an Artin group and let $X_S$ be its Artin complex. Then $X_S$ in connected. Additionally, if $|S|\geq 3$, then $X_S$ is simply connected.
\end{lem}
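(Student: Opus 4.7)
My plan is to establish the two claims separately, using the explicit model $X_S = A_S \times K / \sim$ and the general theory of developments of complexes of groups from Bridson--Haefliger.

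\textbf{Connectedness.} First I would observe that, from the description of the local groups, a generator $s \in S$ lies in the local group $A_{S'}$ of a vertex of $K$ of type $S'$ (so $|S'|=|S|-1$) if and only if $s \in S'$; consequently $s$ fixes every vertex of $K$ except the vertex $v_s$ of type $S\setminus\{s\}$. Since $K$ is a simplex and $|S|\geq 2$, there is always some other vertex $w$ of $K$ adjacent to any given vertex $v$ and fixed by $s$. Using the edge from $v$ to $w$ in $K$, we obtain a path $v-w=s\cdot w-s\cdot v$ in $X_S$. Hence multiplication by any generator moves a vertex along an edge-path. Since every vertex of $X_S$ is of the form $g\cdot v$ with $v\in K$, and $K$ is connected, induction on the word length of $g\in A_S$ gives connectedness of $X_S$.

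\textbf{Simple connectedness when $|S|\geq 3$.} For this I would appeal to the theory of \cite[Chapter II.12]{BH}. The underlying complex $K$ is a simplex, hence simply connected, so the fundamental group $\pi_1(G(\mathcal{P}))$ of the simplex of groups coincides with the colimit (direct limit) of the system of local groups $\{A_{S'}\}_{S'\subsetneq S}$ equipped with the natural inclusions. By the characterisation of developments, $X_S = D_K(\mathcal{P},\varphi)$ is simply connected if and only if the morphism $\varphi:\pi_1(G(\mathcal{P}))\to A_S$ induced by the inclusions $A_{S'}\hookrightarrow A_S$ is an isomorphism.

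To verify that $\varphi$ is an isomorphism I would read off a presentation of the colimit: its generators are the union of the generating sets of the $A_{S'}$ (which, taking $|S'|=1$, is precisely $S$), and its relations are those of each $A_{S'}$ together with identifications from inclusions. Every defining Artin relation of $A_S$ is of the form $\underbrace{sts\cdots}_{m_{st}}=\underbrace{tst\cdots}_{m_{st}}$ and involves only two letters, hence it already holds inside $A_{\{s,t\}}$; because $|S|\geq 3$, the subset $\{s,t\}$ is a \emph{proper} subset of $S$, so $A_{\{s,t\}}$ is one of the local groups in our diagram. Thus the colimit satisfies exactly the Artin presentation, so it maps isomorphically to $A_S$ (injectivity of the local maps is guaranteed by Van der Lek's theorem \cite{V}). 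Consequently $\varphi$ is an isomorphism and $X_S$ is simply connected.

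The main obstacle is the second part: one must be careful to invoke the correct statement from \cite{BH} relating simple connectedness of the development to the identification of $\pi_1(G(\mathcal{P}))$ with $A_S$, and to confirm that the colimit has the predicted presentation. The hypothesis $|S|\geq 3$ is essential precisely so that every two-generator relation of $A_S$ sits inside a proper standard parabolic subgroup; when $|S|=2$ this fails and $X_S$ is a tree (in particular not simply connected in the relevant sense unless one adapts the statement).
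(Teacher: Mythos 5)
Your argument is correct and is essentially the approach behind the paper's version of this statement: the lemma is quoted from \cite{CMV}, whose proof likewise combines the explicit model $A_S\times K/\sim$ (orbit/generation argument for connectedness) with the criterion from \cite[Chapter II.12]{BH} that the development is simply connected exactly when the natural map from the direct limit of the proper standard parabolic subgroups to $A_S$ is an isomorphism, which holds for $|S|\geq 3$ because every defining Artin relation involves only two generators and hence lives in a proper parabolic, with injectivity of the local groups supplied by \cite{V}. One small correction to your closing aside, which does not affect the proof: for $|S|=2$ with $m_{st}<\infty$ the complex $X_S$ is not a tree but a graph containing cycles of length $2m_{st}$ (cf.\ Lemma \ref{systole}), and this is the actual reason simple connectedness fails there; it is a tree precisely when $m_{st}=\infty$.
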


\begin{lem}[\cite{CMV}, Lemma 6]\label{links} Let $A_S$ be an Artin group with Artin complex $X_S$. The link of a simplex of type $S'$ is isomorphic to the Artin complex $X_{S'}$ associated to the Artin group $A_{S'}$. 
\end{lem}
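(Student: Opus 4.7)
The plan is to prove the lemma by working in the explicit model $X_S=A_S\times K/{\sim}$ and invoking two ingredients: the uniqueness of the development of a simple morphism of complexes of groups (\cite[Theorem II.12.18]{BH}), and Van der Lek's intersection theorem $A_{T_1}\cap A_{T_2}=A_{T_1\cap T_2}$ for $T_1,T_2\subseteq S$. Since $A_S$ acts transitively on simplices of a given type, it is enough to identify the link $L:=Lk_{X_S}(\Delta_{S'})$ of the canonical representative $\Delta_{S'}\subset K$, whose stabilizer is exactly the local group $A_{S'}$.

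First I would pin down the fundamental domain of the induced $A_{S'}$-action on $L$. Writing $v_s$ for the vertex of $K$ corresponding to the codimension-$(|S|-1)$ face $\Delta_{S\setminus\{s\}}$, the face $\Delta_{S'}$ is spanned by $\{v_t:t\notin S'\}$, so the link of $\Delta_{S'}$ inside $K$ is the opposite face $K'':=\mathrm{span}\{v_s:s\in S'\}$, a simplex of dimension $|S'|-1$. From the explicit description of the development, $K''$ is a strict fundamental domain for $A_{S'}$ acting on $L$.

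Next I would check that the simplex-of-groups structure induced on $K''$ by this action matches the canonical one defining $X_{S'}$. For $U\subseteq S'$, the face of $K''$ spanned by $\{v_s:s\in U\}$ equals, as a face of $K$, the face $\Delta_{S\setminus U}$, whose $A_S$-stabilizer is $A_{S\setminus U}$; hence its $A_{S'}$-stabilizer is
\[
A_{S\setminus U}\cap A_{S'}=A_{(S\setminus U)\cap S'}=A_{S'\setminus U}
\]
by Van der Lek. Under the tautological bijection $v_s\mapsto v_s$ identifying $K''$ with the canonical $(|S'|-1)$-simplex used to build $X_{S'}$, this face is $\Delta_{S'\setminus U}$ with local group $A_{S'\setminus U}$. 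The two simplex-of-groups structures therefore coincide, and the uniqueness of the development yields $L\cong X_{S'}$.

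The main obstacle, and the place where the argument could easily go wrong, is the label re-indexing between $K$ and $K''$: the same face of $K''$ is labeled $S\setminus U$ when viewed in $K$, but should be labeled $S'\setminus U$ when viewed as a face of the canonical $(|S'|-1)$-simplex. Van der Lek's intersection theorem is precisely what converts one label into the other after restricting stabilizers from $A_S$ to $A_{S'}$; without it one would only obtain an inclusion of local groups rather than equality.
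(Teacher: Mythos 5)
The paper gives no proof of this lemma—it is quoted directly from \cite{CMV} (their Lemma 6)—so there is nothing internal to compare against; judged on its own, your argument is correct and is essentially the standard development argument used there: reduce by transitivity to $\Delta_{S'}\subset K$, observe that $A_{S'}=\mathrm{Stab}(\Delta_{S'})$ acts on the link with strict fundamental domain the opposite face, compute the face stabilizers via Van der Lek's intersection theorem $A_{S\setminus U}\cap A_{S'}=A_{S'\setminus U}$, and conclude by uniqueness of the development (\cite[Theorem II.12.18]{BH}). The only step stated without justification is that $K''$ is a strict fundamental domain for $A_{S'}$ on the link, but this follows at once from the explicit model: any simplex of $X_S$ containing $\Delta_{S'}$ has the form $g\Delta_T$ with $T\subseteq S'$, and $g^{-1}\Delta_{S'}=\Delta_{S'}$ forces $g\in A_{S'}$.
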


\begin{lem}[\cite{CMV}, Lemma 9]\label{systole} Let $A_S$ be an Artin group with $S=\{s,t\}$. Then any cycle in $X_S$ has at least $2m_{st}$ edges, and it it a tree if $m_{st}=\infty$.
\end{lem}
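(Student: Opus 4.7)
The plan is to identify $X_S$ explicitly with a bipartite coset graph and translate the length bound into a statement about alternating-syllable relations in a $2$-generator Artin group.

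Since $|S|=2$, the fundamental simplex $K$ is a $1$-simplex, so $X_S$ is a $1$-dimensional complex. From the description $X_S=A_S\times K/{\sim}$, the vertices of $X_S$ are cosets in $A_S/\langle s\rangle\sqcup A_S/\langle t\rangle$, and for each $g\in A_S$ there is an edge joining $g\langle s\rangle$ to $g\langle t\rangle$. In particular $X_S$ is bipartite, every cycle has even length, and, using $\langle s\rangle\cap\langle t\rangle=\{1\}$ (Van der Lek), a cycle of length $2\ell$ based at $\langle s\rangle$ corresponds exactly to an alternating-syllable word $w=s^{n_1}t^{m_1}\cdots s^{n_\ell}t^{m_\ell}$ with all $n_i,m_i\neq 0$ and $w\in\langle s\rangle$ (the nonzero-exponent condition encoding the requirement that consecutive vertices be distinct).

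For $m_{st}=\infty$, $A_S=\langle s\rangle*\langle t\rangle$ is a free product, and the normal form theorem for free products forbids any such nontrivial alternating word from lying in a free factor. Hence $X_S$ has no cycles and is a tree.

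For finite $m=m_{st}$, rewriting the braid relation $\underbrace{sts\cdots}_{m}=\underbrace{tst\cdots}_{m}$ as $sts\cdots(tst\cdots)^{-1}=1$ produces an alternating-syllable word of length $2m$ representing $1\in\langle s\rangle$, which one checks traces out an embedded cycle of length exactly $2m$. The main obstacle is the lower bound. I would argue this via a van Kampen diagram: after right-multiplying by an appropriate power of $s$, any alternating relation with $\ell<m$ pairs of syllables yields a trivial word admitting a van Kampen diagram $D$ over the single defining relator of length $2m$; an application of the combinatorial Gauss--Bonnet theorem to $D$ (each $2$-cell of $D$ carrying a boundary of length $2m$) would force the boundary of $D$ to have syllable length at least $2m$, i.e.\ $\ell\geq m$. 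Alternatively, the Garside normal form for dihedral Artin groups fixes the alternating syllable length modulo the single defining relation of length $2m$, which already rules out any shorter alternating relation.
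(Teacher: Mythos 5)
Your reduction is set up correctly: for $|S|=2$ the development is exactly the bipartite coset graph on $A_S/\langle s\rangle\sqcup A_S/\langle t\rangle$ with an edge for each $g\in A_S$, and (using that $\langle s\rangle,\langle t\rangle$ are infinite cyclic and intersect trivially) a cycle of $2\ell$ edges corresponds to an alternating word with $2\ell$ nonzero syllables representing an element of $\langle s\rangle$ (equivalently, after absorbing a power of $s$, the identity). The case $m_{st}=\infty$ then follows completely from the normal form theorem for free products, as you say. Note that the paper itself gives no proof of this lemma: it quotes \cite{CMV} (Lemma 9) and only remarks that the same argument covers $m_{st}=2$; so the entire mathematical content of your write-up is the lower bound for finite $m$, and that is where it stops short.

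For finite $m=m_{st}$ neither of your two suggested routes is an argument yet. The van Kampen/Gauss--Bonnet sketch is not routine: the single relator $sts\cdots(tst\cdots)^{-1}$ admits very long pieces (for $m=2k$, two $2$-cells of a reduced diagram can share a subpath of length roughly half the relator, and similarly for odd $m$), so the presentation satisfies no small-cancellation condition that would let Gauss--Bonnet, applied blindly, convert ``each $2$-cell has boundary length $2m$'' into ``the boundary has at least $2m$ syllables.'' One needs an actual classification of reduced diagrams over the dihedral presentation (this is precisely the kind of multi-page analysis carried out, e.g., in \cite{HO} for two-generator Artin groups), and you have not done it. The second alternative is circular as stated: ``the Garside normal form fixes the alternating syllable length'' is exactly the assertion to be proved, namely that an alternating word with all exponents nonzero representing an element of $\langle s\rangle$ has at least $2m$ syllables; without quoting a precise normal-form statement with a proof or reference, nothing has been established. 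A concrete way to close the gap along your lines is to use the standard rewritings of dihedral Artin groups, $A\cong\langle a,b\mid a^2=b^m\rangle$ for $m$ odd (with $a=\Delta$, $b=st$) and $A\cong\langle a,b\mid [a,b^{m/2}]=1\rangle$ for $m$ even (with $a=s$, $b=st$), pass to the quotient by the center, and control syllable length there via the normal form theorem for free products/amalgams; alternatively quote a geodesic normal form for dihedral Artin groups with respect to syllable length. As it stands, the heart of the lemma (girth $\geq 2m$) is asserted rather than proved.
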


\begin{remark}
In \cite{CMV}, they show the previous result for $m_{st}\in\{3,4,\dots,\infty\}$, since they work with large-type Artin groups. However, the result also holds for the case $m_{st}=2$, and the proof is the same as in the other cases.
\end{remark}


\section{Parabolic subgroups}\label{parabolicsubgroups}

We now  define a length function for the Artin complex of a given $(2,2)$-free two-dimensional Artin group, and show that it is a systolic-by-function length complex. Then we make use of its geometric structure to prove Theorem \ref{theorem}.

\begin{definition}
An Artin group $A_S$ is \textit{$(2,2)$-free} if its defining graph does not have two consecutive edges labeled by 2. 
\end{definition}

Notice that large-type Artin groups are $(2,2)$-free two-dimensional Artin groups. Now we give an equivalent characterization.

\begin{prop}\label{labels}
A two-dimensional Artin group $A_S$ is $(2,2)$-free if and only if there exist numbers $m'_{st}\in \{2,3,4,6\}$ with $m'_{st}\leq m_{st}$ and $m'_{st}=m'_{ts}$ for every $s,t\in S$, such that $\frac{1}{m'_{st}}+\frac{1}{m'_{tr}}+\frac{1}{m'_{rs}}\leq 1$ and $\frac{1}{m'_{st}}\leq\frac{1}{m'_{tr}}+\frac{1}{m'_{rs}}$ for every $r,s,t\in S$.
\end{prop}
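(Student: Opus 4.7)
The plan is to prove both implications separately.

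For $(\Leftarrow)$, assume such an $m'$ exists. Two-dimensionality follows from the first inequality applied to any triangle of $\Gamma_S$: all labels are finite, and since $m'_{ij}\leq m_{ij}$ we have $\tfrac{1}{m_{ij}}\leq\tfrac{1}{m'_{ij}}$, so $\sum\tfrac{1}{m_{ij}}\leq\sum\tfrac{1}{m'_{ij}}\leq 1$. For $(2,2)$-freeness, suppose for contradiction that $m_{st}=m_{tr}=2$ for some $r,s,t\in S$. Then $m'_{st}=m'_{tr}=2$ is forced (as $2$ is the only value in $\{2,3,4,6\}$ that is $\leq 2$), and the first inequality on the triple $\{r,s,t\}$ yields $\tfrac{1}{2}+\tfrac{1}{2}+\tfrac{1}{m'_{rs}}\leq 1$, a contradiction.

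For $(\Rightarrow)$, assume $A_S$ is $(2,2)$-free two-dimensional. I would start from the tentative assignment $m'_{st}=m_{st}$ when $m_{st}\in\{2,3\}$, $m'_{st}=4$ when $m_{st}\in\{4,5\}$, and $m'_{st}=6$ when $m_{st}\geq 6$ or $m_{st}=\infty$. The only triples on which the sought inequalities can fail are those containing a pair labeled $2$: if $m_{st}=2$, then by $(2,2)$-freeness we have $m_{tr},m_{rs}\geq 3$ (when finite), by two-dimensionality $\tfrac{1}{m_{tr}}+\tfrac{1}{m_{rs}}\leq\tfrac{1}{2}$, while the two sought inequalities for $m'$ on $\{r,s,t\}$ together force $\tfrac{1}{m'_{tr}}+\tfrac{1}{m'_{rs}}=\tfrac{1}{2}$. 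The only pairs in $\{3,4,6\}^2$ realizing this equality are $(3,6)$, $(6,3)$, and $(4,4)$; when the tentative values of $(m'_{tr},m'_{rs})$ fall outside this list I would override them by reducing a $4$ to $3$ or a $6$ to $4$, both of which remain compatible with $m'_{ij}\leq m_{ij}$.

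The decisive step, which I expect to require the most care, is ensuring global consistency of these overrides. By $(2,2)$-freeness, each vertex of $\Gamma_S$ is incident to at most one $2$-edge, so any non-$2$ pair $\{u,v\}$ lies in at most two triples containing a $2$-edge, namely $\{u,v,u'\}$ and $\{u,v,v'\}$, where $u',v'$ denote the (possibly absent) $2$-edge partners of $u,v$. A direct case analysis on $m_{vu'}$ and $m_{uv'}$ shows that the two resulting constraints on $m'_{uv}$ are always jointly satisfiable, after at most one reduction on each of $\{v,u'\}$ and $\{u,v'\}$. These neighboring edges are themselves incident to at most one $2$-edge each, so the cascade of forced reductions terminates in a single pass and cannot loop back; the verification that both inequalities then hold on every triple, including triples that are not triangles in $\Gamma_S$ (where $m'$ on the $\infty$-labeled pair can be chosen freely), reduces to the same finite case analysis. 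Without the $(2,2)$-free hypothesis, a vertex could carry two $2$-edges, and the conflicting demands on a single pair $\{u,v\}$ would be genuinely unresolvable, which is exactly where the construction would break.
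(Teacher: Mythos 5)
Your $(\Leftarrow)$ direction is fine, and your reduction of the $(\Rightarrow)$ direction is the right one: for any triple containing a pair with $m'_{st}=2$ the two inequalities together force $\frac{1}{m'_{tr}}+\frac{1}{m'_{rs}}=\frac12$, whose only solutions in $\{3,4,6\}$ are $(3,6)$, $(6,3)$, $(4,4)$, while triples with no label-$2$ pair are automatically satisfied once all other values lie in $\{3,4,6\}$. (The paper instead writes down one explicit assignment --- $2\mapsto2$, $3\mapsto3$, and labels $>3$ sent to $3$, $4$ or $6$ according to their position relative to the $2$-edges --- and leaves the verification to the reader, so your organization around the forced equality is a legitimately different and arguably more transparent route.)

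The gap is in the step you yourself flag as decisive: global consistency is asserted, not proved, and the structural claim you offer for it is false. Because $(2,2)$-freeness makes the $2$-labelled edges a matching, the pairs constrained by $2$-triples split into components that are either a single constrained pair or a closed $4$-cycle $\{u,v\}\,\text{--}\,\{u',v\}\,\text{--}\,\{u',v'\}\,\text{--}\,\{u,v'\}\,\text{--}\,\{u,v\}$; so the cascade of reductions \emph{does} loop back, and consistency around these cycles is exactly what needs an argument. Moreover, the per-triple fixes you describe can conflict: with $m_{uu'}=m_{vv'}=2$, $m_{uv}=m_{uv'}=4$ and $m_{u'v}$ large, reducing the $4$ on $\{u,v\}$ to $3$ forces $\{u,v'\}$ to $6$, which is forbidden, so one must instead reduce the neighbouring $6$ to $4$; in other configurations (e.g.\ $m_{u'v}=3$, $m_{uv'}=4$, the remaining pairs $\infty$) the opposite choice is the only correct one. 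What actually rescues the statement, and what has to be proved, is: (i) by two-dimensionality any pair sharing a $2$-triple with a pair of label $3$ has label at least $6$ or $\infty$; hence (ii) on each component one may take the alternating $3/6$ assignment (with $3$ on the label-$3$ pairs) whenever the component contains a pair of label $3$ --- legal by (i) and consistent because the cycle has even length --- and the constant value $4$ otherwise. With this component-wise choice the finite verification closes; as written, your argument does not contain it.
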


\begin{proof}
It is clear that if such $m'_{st}$ exist, then $A_S$ is $(2,2)$-free. Now suppose that the graph of $A_S$ does not have two consecutive edges labeled by 2. The we can define the $m'_{st}$ in the following way:
\begin{itemize}
\item if $m_{st}=2$, then $m'_{st}=2$;
\item if $m_{st}=3$, then $m'_{st}=3$;
\item if $m_{st}>3$ and the edge is not adjacent to an edge labeled by $2$, then $m'_{st}=3$;
\item if $m_{st}$ forms a triangle with a 2 and a 3, then $m'_{st}=6$;
\item in any other case, $m'_{st}=4$.
\end{itemize} 

It is easy to see that such labeling is well defined since $A_S$ is a $(2,2)$-free two-dimensional Artin group, and that it satisfies the required conditions.
\end{proof}

Let $A_S$ be a $(2,2)$-free two-dimensional Artin group. We define a length function for $X_S$, $l:\text{edges}(X_S) \to [0,\frac{1}{2}]$ as follows. Edges in $X_S$ are simplices of codimension $|S|-2$, so they correspond to subsets of $S$ that are missing two elements. We define the length of an edge of type~ $S\backslash\{s,t\}$ to be $\frac {1}{m'_{st}}$, where the $m'_{st}$ are the ones in the previous proposition. Since $A_S$ is $(2,2)$-free and two-dimensional, the sum of the lengths of the three edges of every triangle is less than or equal to 1, and the triangle inequality holds, so $l$ is well defined.

\begin{thm}\label{artinlsystolic}
Let $A_S$ be a $(2,2)$-free two-dimensional Artin group with $|S|\geq 3$. Then $X_S$ with the length function defined as above is systolic-by-function.
\end{thm}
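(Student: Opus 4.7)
The plan is to verify the three conditions of Definition~\ref{lsystolic}: that $X_S$ is connected, simply connected, and locally large. Connectivity and simple connectivity follow immediately from Lemma~\ref{simplyconnected}, since $|S|\geq 3$, so the real content of the theorem is local largeness.

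By Lemma~\ref{links}, the link of any vertex in $X_S$ is isomorphic to $X_{S'}$ for some $S'\subsetneq S$ with $|S'|=|S|-1$, and under this isomorphism an edge of type $S'\setminus\{s,t\}$ in $X_{S'}$ corresponds to an edge of type $S\setminus\{s,t\}$ in $X_S$, hence carries length $1/m'_{st}$ from either point of view. It therefore suffices to prove, by induction on $|S'|$, the following strengthening: for every $S'\subseteq S$ with $|S'|\geq 2$, the length complex $X_{S'}$, endowed with edge lengths $1/m'_{st}$ inherited from Proposition~\ref{labels} applied to $A_S$, is large.

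For the base case $|S'|=2$, say $S'=\{s,t\}$, the complex $X_{S'}$ is one-dimensional. Flagness is vacuous, since by Lemma~\ref{systole} every cycle has at least $2m_{st}\geq 4$ edges and no $3$-cycles exist; and any full cycle has length at least $2m_{st}\cdot\tfrac{1}{m'_{st}}\geq 2$, using $m'_{st}\leq m_{st}$ from Proposition~\ref{labels}. For the inductive step $|S'|\geq 3$, Lemma~\ref{simplyconnected} supplies simple connectivity of $X_{S'}$, and Lemma~\ref{links} together with the inductive hypothesis guarantee that every vertex link of $X_{S'}$ is large. Hence $X_{S'}$ is systolic-by-function, and Theorem~\ref{localtoglobal} implies it is large.

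I expect the main subtlety to be purely bookkeeping around the length function on sublinks. The labels $m'_{st}$ produced by Proposition~\ref{labels} for $A_S$ are a priori only required to satisfy the triangle and perimeter inequalities over triples in $S$, but these automatically include all triples inside any subset $S'\subseteq S$, so the same $m'_{st}$ define a valid length function on every $X_{S'}$. Fixing this single choice of labels throughout guarantees that the length of an edge in a link of $X_S$ agrees with its length as an edge of $X_S$ itself, so the induction closes cleanly and the theorem follows.
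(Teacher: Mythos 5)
Your proposal is correct and follows essentially the same route as the paper: connectivity and simple connectivity from Lemma \ref{simplyconnected}, links identified via Lemma \ref{links}, the two-generator case handled by Lemma \ref{systole} together with $m'_{st}\leq m_{st}$, and the inductive step closed by Theorem \ref{localtoglobal}. Phrasing the induction over all subsets $S'$ with base $|S'|=2$ rather than on $|S|$ with base $|S|=3$ is only a cosmetic repackaging of the paper's argument.
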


\begin{proof}
The proof proceeds by induction on $|S|$ by using our local-to-global Theorem \ref{localtoglobal}.

If $|S|=3$, by Lemma \ref{simplyconnected} $X_S$ is connected and simply connected. Let $v$ be a vertex of $X_S$. Lemma~ \ref{links} says that $Lk_{X_S}(v)$ is isomorphic to the Artin complex $X_{S'}$ associated to the Artin group $A_{S'}$, where $S'\subset S$ with $|S'|=2$. The complex $X_{S'}$ is a graph and it inherits the length function from $X_S$. If $X_{S'}$ is a tree, then it is clearly a large length complex. If it is not a tree, then by Proposition \ref{labels}, the length of its edges is greater than or equal to $\frac{1}{m}$, where $m$ is the label in A$_{S'}$. By Lemma~ \ref{systole} all cycles in $X_{S'}$ have at least $2m$ edges, so it is a large length complex. Thus $X_S$ is systolic-by-function.

Now assume that $|S|>3$ and that the claim holds for every $(2,2)$-free two-dimensional Artin group with less generators. Once again, we know $X_S$ is connected and simply connected from Lemma \ref{simplyconnected}. Applying Lemma \ref{links} we get that the link of every vertex is systolic-by-function. Hence, by Theorem \ref{localtoglobal}, the link of every vertex is large. Therefore $X_S$ is systolic-by-function.
\end{proof}

\begin{remark}
With the length function defined as above, one can give $X_S$ a metric such that it is metrically systolic in the sense of \cite{HO}. Concretely, if an edge $e\in X_S$ has $l(e)=\frac{1}{k}$, then the length of $e$ in the metric is $\sin(\frac{\pi}{k})$. We will not use metric systolicity in this article, but this fact may be of interest for other applications.
\end{remark}

We define the \textit{distance} between two vertices $u,v \in X_S$ as
$$d(u,v) = \min\{l(\gamma) \mid \gamma \text{ is a path connecting } u \text{ and } v\}.$$
Note that this minimum is attained, because $X_S$ is connected and the image of $l$ is a finite subset of $[0, \frac{1}{2}]$. We say that a path $\gamma$ between $u$ and $v$ is a \textit{geodesic} if it is of minimum length.

We are now ready to prove Theorem \ref{theorem}.

\begin{proof}[Proof of Theorem \ref{theorem}]
We want to show that if an element $g\in A_S$ fixes two vertices in $X_S$, then there is a path between said vertices that is fixed pointwise by $g$. Suppose it is not the case. Take vertices $u$ and $v$, and $g\in A_S$ such that the condition fails and such that $d(u,v)$ is minimal among such pairs. Let $\gamma$ be a geodesic between them (it exists because $X_S$ is connected). Then $g$ maps $\gamma$ to another geodesic $\gamma'$ between $u$ and $v$. Since $d(u,v)$ is minimal, the union of $\gamma$ and $\gamma'$ determines a cycle in $X_S$. We will show that we can fill the cycle with a minimal filling diagram and find a shortcut between $u$ and $v$, contradicting the fact that $\gamma$ is a geodesic.

Let $\varphi: D\to X_S$ be a minimal filling diagram for the concatenation of $\gamma$ and $\gamma'$ (it exists because $X_S$ is simply connected). We label the vertices of $\gamma$ in $D$ as $u=v_0,v_1,\dots,v_{|\gamma|-1}, v_{|\gamma|} = v$, and the vertices of $\gamma'$ in $D$ as $u=v'_0,v'_1,\dots,v'_{|\gamma'|-1}, v'_{|\gamma'|} = v$.  Note that
\begin{enumerate}
\item we may assume, without loss of generality, that there is no edge between nonconsecutive vertices of $\gamma$, or between nonconsecutive vertices of $\gamma'$, because $\gamma$ is a geodesic, and
\item there is no edge between $v_i$ and $v'_i$ for $1\leq i \leq |\gamma|-1$, because vertices of the same type are not connected by an edge in $X_S$.
\end{enumerate}

However, there may be an edge between $v_i$ and $v'_j$ if $i \neq j$. Take the rightmost of these edges. We call it $e$ and assume that it connects $v_k$ with $v'_{k+r}$ (see Figure \ref{figthree}). Let $\tilde{D}$ be the disk delimited by $e$, and let $\tilde{\gamma}$ and $\tilde{\gamma'}$ be the paths connecting $v_k$ and $v$ along the boundary of $\tilde{D}$, where $\tilde{\gamma'}$ contains $e$. If there is no such edge $e$, we consider $\tilde{D}=D$, $\tilde{\gamma}=\gamma$ and $\tilde{\gamma'}=\gamma'$, and continue in the same way. It is clear that $\tilde{\gamma}$ is a geodesic. Then $l(e)\geq d(v_k,v_{k+r})$. We also have that $l(e)\leq d(v_k,v_{k+r})$. To see so, we project both $e$ and the path in $\tilde{\gamma}$ between $v_k$ and $v_{k+r}$ to the fundamental domain $K$, and apply the triangle inequality $r-1$ times. Hence, $l(\tilde{\gamma'}) = l(\tilde{\gamma})$. From the previous observations, $\tilde{D}$ has no edges connecting nonconsecutive boundary vertices. Therefore, it has at least one interior vertex. As in the proof of Theorem \ref{localtoglobal} we get that $l(\partial \tilde{D}) = l(\tilde{\gamma}) + l(\tilde{\gamma'}) \geq 2$. 

\begin{figure}[h]
\begin{tikzpicture}[scale=0.6]

\tikzstyle{point}=[circle,thick,draw=black,fill=black,inner sep=0pt,minimum width=4pt,minimum height=4pt]
    \node (0)[point] at (0,0) {};
    \node (1)[point] at (1,1) {};
    \node (1')[point] at (1,-1) {};
    \node (2)[point] at (2,1) {};
    \node (2')[point] at (2,-1) {};
    \node (3)[point] at (6,1) {};
    \node (3')[point] at (6,-1) {};
    \node (4)[point] at (7,1) {};
    \node (4')[point] at (7,-1) {};
    \node (5)[point] at (8,1) {};
    \node (5')[point] at (8,-1) {};
    \node (6)[point] at (9,1) {};
    \node (6')[point] at (9,-1) {};
    \node (7)[point] at (10,1) {};
    \node (7')[point] at (10,-1) {};
    \node (8)[point] at (11,0) {};
    
    \draw[ultra thick] (0)--(1);
    \draw[ultra thick] (0)--(1');
    \draw[ultra thick] (1)--(2);
    \draw[ultra thick] (1')--(2');
    \draw[ultra thick] (3)--(4);
    \draw[ultra thick] (3')--(4');
    \draw[ultra thick] (4)--(5);
    \draw[ultra thick] (4')--(5');
    \draw[ultra thick] (5)--(6);
    \draw[ultra thick] (5')--(6');
    \draw[ultra thick] (6)--(7);
    \draw[ultra thick] (6')--(7');
    \draw[ultra thick] (7)--(8);
    \draw[ultra thick] (7')--(8);
    \draw[ultra thick] (1')--(2);
    \draw[ultra thick] (3')--(4);
    \draw[ultra thick] (4)--(6');
    \draw[ultra thick, loosely dotted] (2)--(3);
    \draw[ultra thick, loosely dotted] (2')--(3');
    
    \node[] at (7.5,0) {$e$};
    \node[above] at (4) {$v_k$};
    \node[below] at (6') {$v'_{k+r}$};
    \node[left] at (0) {$u$};
    \node[right] at (8) {$v$};
    \node[] at (1.5,1.5) {$\gamma$};
    \node[] at (1.5,-1.5) {$\gamma'$};
    \node[] at (4,0) {$D$};
    \node[above right] at (6) {$\tilde{\gamma}$};
    \node[below right] at (7') {$\tilde{\gamma'}$};
    
    \draw [fill=black, fill opacity=0.2] (7,1)--(9,1)--(10,1)--(11,0)--(10,-1)--(9,-1)--cycle;
    
    \node[] at (9.5,0) {$\tilde{D}$};
\end{tikzpicture}
\caption{Disk $D$, the rightmost edge $e$ and disk $\tilde{D}$}
\label{figthree}
\end{figure}
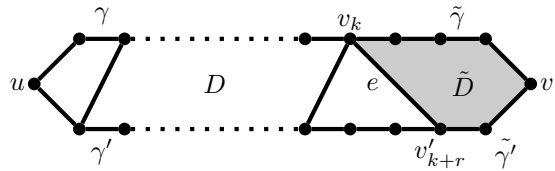

If either $v_k$ or $v$ have degree greater than 3, then by Lemma \ref{annulus} we could find a path in $\tilde{D}$ connecting $v_k$ and $v$ shorter than $\tilde{\gamma}$. That would contradict the fact that $\gamma$ is a geodesic. So we can assume that both $v_k$ and $v$ have degree 3 (it is clear that they cannot have degree 2). Let $e_1$ and $e_2$ be the interior edges incident to $v_k$ and $v$ respectively. If either of them has length less than $\frac{1}{2}$, then either by Lemma \ref{annulus}, in case $\tilde{D}$ has more than one interior vertex, or by the inequality $l(\partial \tilde{D})\geq2$ if $\tilde{D}$ has exactly one interior vertex, we can find a shortcut and get a contradiction. 

The only situation remaining is when $v_k$ and $v$ have degree 3, and $l(e_1)=l(e_2)=\frac{1}{2}$. In that case we can find a path $\sigma$ in $\tilde{D}$ starting at $v_k$ and ending with $e_2$, with $l(\sigma)\leq l(\tilde{\gamma})$. If the inequality is strict the proof is finished. If $l(\sigma) = l(\tilde{\gamma})$, consider the geodesic $\tau$ from $u$ to $v$ consisting of concatenating the subpath of $\gamma$ that goes from $u$ to $v_k$ with $\varphi(\sigma)$. We can assume $\varphi(\sigma)$ is a path in $X_S$, because otherwise we would have found a shortcut. Since this geodesic ends with an edge of length $\frac{1}{2}$ and no triangle has two edges of length $\frac{1}{2}$, applying the same procedure to $\tau$ gives us a path between $u$ and $v$ shorter than $\gamma$, obtaining the desired contradiction.
\end{proof}

\begin{proof}[Proof of Theorem \ref{main}]
It follows immediately from Theorems \ref{parabolic} and \ref{theorem}.
\end{proof}

Applying this theorem together with previous results we get following corollary.

\begin{cor}
Let $A_S$ be an Artin group with at most three generators. Then the intersection of an arbitrary family of parabolic subgroups is a parabolic subgroup.
\end{cor}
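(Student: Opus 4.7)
The plan is a short case analysis reducing each three-generator Artin group to a setting where the intersection property is already known. The cases $|S|\leq 2$ are part of \cite{CGGW,CMV}, as was already noted in the paragraph following Theorem \ref{theorem}, so it suffices to treat $|S|=3$. I would distinguish three subcases according to whether $A_S$ is two-dimensional and whether it is $(2,2)$-free.

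If $A_S$ is two-dimensional and $(2,2)$-free, Theorem \ref{main} applies directly. If $A_S$ is not two-dimensional, then since $|S|=3$ the graph $\Gamma_S$ must be a complete triangle whose labels $p,q,r$ satisfy $\frac{1}{p}+\frac{1}{q}+\frac{1}{r}>1$; the classification of finite triangle Coxeter groups then identifies $A_S$ as an Artin group of spherical type, and the intersection result of \cite{CGGW} applies to give the conclusion.

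The remaining case is when $A_S$ is two-dimensional but not $(2,2)$-free. Then $\Gamma_S$ carries two consecutive edges labeled by $2$, say $m_{st}=m_{tr}=2$. Two-dimensionality applied to the only candidate triangle $\{s,t,r\}$ forces $\frac{1}{2}+\frac{1}{2}+\frac{1}{m_{sr}}\leq 1$, hence $m_{sr}=\infty$, so there is no edge between $s$ and $r$. The resulting presentation $\langle s,t,r\mid st=ts,\ tr=rt\rangle$ exhibits $A_S$ as a right-angled Artin group, isomorphic to $F_2\times\Z$ with $F_2=\langle s,r\rangle$ and $\Z=\langle t\rangle$, so the intersection property follows from the RAAG (or more generally, graph product) case treated in \cite{AM}. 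There is essentially no obstacle here; the only observation required is that two-dimensionality excludes a finite label on the third edge of any triangle with two consecutive $2$-edges, which is precisely what collapses the non-$(2,2)$-free three-generator case to a RAAG.
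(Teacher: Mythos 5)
Your proposal is correct and follows essentially the same route as the paper: the paper's proof is precisely the trichotomy spherical / right-angled / $(2,2)$-free two-dimensional, citing \cite{CGGW}, \cite{AM} and Theorem \ref{main} respectively. The only difference is that you spell out why the trichotomy is exhaustive for $|S|\leq 3$ (the non-two-dimensional triangle is spherical, and two consecutive $2$-labels force $m_{sr}=\infty$, giving $F_2\times\Z$), which the paper asserts without detail.
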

\begin{proof}
Such an Artin group is either spherical type, right-angled or $(2,2)$-free two-dimensional. Therefore, either by \cite{CGGW} for the spherical case; by \cite{AM} for the right-angled case; or by Theorem \ref{main} for the $(2,2)$-free two-dimensional case, we get the desired result.
\end{proof}

We finish this article by applying our results and the algorithm introduced in \cite[Algorithm 4]{C} to solve the conjugacy stability problem for $(2,2)$-free two-dimensional Artin groups. We follow the notation and definitions of \cite{C}.

\begin{thm}[\cite{C}, Theorem A]\label{cumplido}
Let $A_S$ be a standardisable Artin group satisfying the ribbon property and such that every element in $A_S$ has a parabolic closure. Then, there is an algorithm that decides if a parabolic subgroup $P$ of $A_S$ is conjugacy stable or not.
\end{thm}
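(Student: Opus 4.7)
Since Theorem \ref{cumplido} is a direct quotation of \cite[Theorem A]{C}, the author will presumably cite it rather than reprove it. Still, I will sketch the shape such an algorithm would take and how its hypotheses are used, so that its subsequent application to $(2,2)$-free two-dimensional Artin groups makes sense.

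My plan is to reduce conjugacy stability to a finite combinatorial check on conjugations between standard parabolic subgroups. Because conjugacy stability is invariant under conjugation, it suffices to handle a standard parabolic $A_{S'}$. Its failure amounts to the existence of $h, h' \in A_{S'}$ and $g \in A_S$ with $g^{-1}hg = h'$ that cannot be witnessed inside $A_{S'}$. Here the hypothesis that every element has a parabolic closure does the crucial work: passing to parabolic closures, $g$ must conjugate the parabolic closure $P_h$ of $h$ to $P_{h'}$, both contained in $A_{S'}$, so the question collapses to understanding which conjugations between standard parabolic subgroups of $A_{S'}$ can be realised by elements of $A_S$ that lie outside $A_{S'}$.

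The ribbon property is then exploited to parametrise the conjugating elements as finite walks in an auxiliary ribbon graph whose vertices are standard parabolic subgroups and whose edges are elementary ribbons (elements that conjugate one standard parabolic to another). Standardisability should provide canonical normal forms on $A_S$, so that one can algorithmically test whether a given ribbon already belongs to $A_{S'}$. The algorithm then iterates over ribbons of controlled combinatorial complexity, flagging $A_{S'}$ as not conjugacy stable as soon as some ribbon realises a conjugation inside $A_{S'}$ that cannot already be performed by an element of $A_{S'}$.

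The main obstacle in such an approach is termination: one has to bound the complexity of ribbons that must be inspected. This is where the ribbon property and standardisability must interact, the former confining the search to finitely many ribbon types local to $A_{S'}$ and the latter mechanising the residual verification. In the $(2,2)$-free two-dimensional setting of Theorem \ref{conj} the ribbon graph degenerates into exactly the odd-labeled paths in $\Gamma_S$ (since an odd-labeled edge is the classical source of a conjugation between two generators), and the algorithm collapses into the explicit graph-theoretic criterion stated there.
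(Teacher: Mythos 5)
The paper treats this statement purely as an external result, citing \cite[Theorem A]{C} with no proof of its own, so your decision to cite rather than reprove is exactly the paper's approach. Your accompanying sketch does track the broad shape of Cumplido's actual argument (parabolic closures to standardise the conjugating element, ribbons to parametrise conjugations between standard parabolics), but it is only heuristic --- as you acknowledge, the termination/bounding step is not established --- so it should be read as motivation for how Theorem \ref{cumplido} gets applied in Theorem \ref{conj}, not as a substitute proof.
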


It is known by results of Godelle \cite{G} that two-dimensional Artin groups are standardisable and satisfy the ribbon property. Also, by Theorem \ref{theorem} and Theorem \ref{parabolic}, any element in a $(2,2)$-free two-dimensional Artin group has a parabolic closure. Therefore, $(2,2)$-free two-dimensional Artin groups satisfy the hypothesis of Theorem \ref{cumplido}. By examining the aforementioned algorithm in the $(2,2)$-free two-dimensional case we obtain the classification of Theorem \ref{conj}.

\begin{proof}[Proof of Theorem \ref{conj}]
We need to understand \cite[Algorithm 4]{C} for a $(2,2)$-free two-dimensional Artin group $A_S$ and a standard parabolic subgroup $A_{S'}$. Since $A_S$ is two-dimensional, the only spherical-type parabolic subgroups are dihedral Artin groups of type $I_2(m)$. Thus, the algorithm reduces to checking if there exist vertices $s,t$ in $\Gamma_{S'}$ that are connected by an odd-labeled path in~ $\Gamma_S$, but are not connected by an odd-labeled path in $\Gamma_{S'}$. This is exactly the criterion we wanted to prove.
\end{proof}

For a more detailed proof of this fact, see \cite[Theorem C]{CMV}. Their proof is for large-type Artin groups, but it also works in the $(2,2)$-free two-dimensional case.


\end{document}